\documentclass[hidelinks,onefignum,onetabnum]{siamart220329}



\usepackage{lipsum}
\usepackage{amsfonts}
\usepackage{graphicx}
\usepackage{epstopdf}
\usepackage{algorithmic}
\usepackage{booktabs}
\ifpdf
  \DeclareGraphicsExtensions{.eps,.pdf,.png,.jpg}
\else
  \DeclareGraphicsExtensions{.eps}
\fi


\newsiamremark{remark}{Remark}
\newsiamremark{hypothesis}{Hypothesis}
\crefname{hypothesis}{Hypothesis}{Hypotheses}
\newsiamthm{claim}{Claim}
\newsiamremark{assumption}{Assumption}
\newsiamremark{example}{Example}

\headers{Noise level-free regularisation}{T. Jahn}

\title{Noise level free regularisation of general linear inverse problems under unconstrained white noise\thanks{
\funding{Funded  by  the  Deutsche  Forschungsgemeinschaft under Germany's Excellence Strategy - GZ 2047/1, Projekt-ID 390685813.}}}

\author{Tim Jahn\thanks{University of Bonn, Bonn, Germany 
  (\email{jahn@ins.uni-bonn.de}, \url{https://ins.uni-bonn.de/staff/jahn}).}
}

\usepackage{amsopn}

\newcommand{\R}{\mathbb{R}}
\newcommand{\N}{\mathbb{N}}

\newcommand{\E}{\mathbb{E}}

\newcommand{\omd}{\chi_{\Omega_{\delta}}}
\newcommand{\omdp}{\chi_{\Omega_{\delta}'}}
\newcommand{\omk}{\chi_{\Omega_\kappa}}
\DeclareMathOperator*{\argmin}{arg\!\min}


\ifpdf
\hypersetup{
  pdftitle={Noise level free regularisation of general linear inverse problems under unconstrained white noise},
  pdfauthor={T. Jahn}
}
\fi




\begin{document}

\maketitle

\begin{abstract}
In this note we solve a general statistical inverse problem under absence of knowledge of both the noise level and the noise distribution via application of the (modified) heuristic discrepancy principle.
Hereby the unbounded (non-Gaussian) noise is controlled via introducing an auxiliary discretisation dimension and choosing it in an adaptive fashion. We first show convergence for completely arbitrary compact forward operator and ground solution. Then the uncertainty of reaching the optimal convergence rate is quantified in a specific Bayesian-like environment. 
 We conclude with numerical experiments.
\end{abstract}

\begin{keywords}
statistical inverse problems, heuristic discrepancy principle, convergence, unknown noise level
\end{keywords}

\begin{MSCcodes}

\end{MSCcodes}

\section{Introduction}\label{sec1}

We aim to solve the equation 

\begin{equation*}
	Kx=y^\delta
\end{equation*}

for $K:\mathcal{X}\to \mathcal{Y}$ a compact operator between infinite-dimensional Hilbert spaces. Hereby, $y^\delta$ is a noisy measurement of the exact data $y^\dagger\in \mathcal{R}(K)$ and $x^\dagger:=K^+y^\dagger$ is the exact minimum norm solution we would like to reconstruct. We exclude the trivial case when the range of $K$ is finite-dimensional, which yields, due to compactness of $K$, that the above equation is ill-posed in the sense that the (pseudo) inverse $K^+$ is not bounded. The data is assumed to be corrupted by additive white noise $Z$ with noise level $\delta>0$ and  we write $y^\delta=y^\dagger+\delta Z$. We stress that while $y^\dagger$ is an element of the Hilbert space $\mathcal{Y}$, the white noise $Z$ and consequently $y^\delta$ are not. The measurement has to be understood in a weak sense as follows. The noise $Z$ is defined as a Hilbert-space process, i.e. a bounded linear mapping $Z:\mathcal{Y}\to L^2(\Omega,\R)$, see \cite{bissantz2007convergence}. The paradigm is that one cannot measure elements of the infinite-dimensional space $\mathcal{Y}$ directly, but only certain components of it, where each one is corrupted by random noise. More precisely, we have access to (random) measurements
	
	$$y^\delta(y):=(y^\dagger,y) + \delta Z(y)\in L^2(\Omega,\R),$$
	
	for varying $y\in\mathcal{Y}$. For convenience we write $(y^\delta,y)=y^\delta(y)=(y^\dagger,y)+\delta(Z,y)$.  A Hilbert-space-process is called white noise, when it holds that

\begin{itemize}
	\item $\E[(Z,y)] = 0$,
	\item $\E[(Z,y),(Z,y')] = (y,y')$,
	\item $\|y'\|(Z,y) \stackrel{d}{=} \|y\| (Z,y')$
\end{itemize}

for all $y,y'\in \mathcal{Y}$. We call $Z$ a Gaussian white noise (process), when $(Z,y/\|y\|)$ is standard Gaussian, however, throughout the manuscript it is not assumed that the white noise is Gaussian, if not explicitly stated. The standard way to approach the problem is through regularisation, i.e., the unbounded inverse $K^+$ is replaced with a whole family of linear bounded reconstructions and then a particular element of this family is chosen dependent on the measurement such that the exact solution $x^\dagger$ is approached for vanishing noise level $\delta\to0$, see the classic monographs from Tikhonov et al.  \cite{tikhonov1995numerical} and Engl et al. \cite{EnglHankeNeubauer:1996}, or the more recent ones from Lu \& Pereverzev \cite{lu2013regularization}, Ito \& Jin \cite{ItoJin:2015} and Hanke \cite{hanke2017taste}, to name a few for a general overview.

 In this note a method for choosing the reconstruction is presented that does not require any additional knowledge apart from $y^\delta$, neither of the noise distribution nor of the noise level $\delta$. This class of methods is called noise level free or heuristic and has been a subject of study ever since due to its importance for practical applications, see Bauer \& Lukas \cite{bauer2011comparingparameter} or Kindermann \cite{kindermann2011convergence}.
 
  For conceptual reasons we rely on spectral cut-off regularisation using explicitly the singular value decomposition of the operator $K$. This decomposition consists of orthonormal bases {of the orthogonal complement of the null space of $K$ and the closure of the range of $K$, denoted by 
  
\begin{equation*}  
  (v_j)_{j\in\N}\subset \mathcal{N}(K)^\perp:=\left\{x\in\mathcal{X}~:~(x,v)=0~\forall v\in\mathcal{X}~\mbox{with}~Kv=0\right\}\subset \mathcal{X}
  \end{equation*}
   and  
\begin{equation*}   
   (u_j)_{j\in\N}\subset \overline{\mathcal{R}(K)}:=\overline{\left\{y\in\mathcal{Y}~:~\exists x\in\mathcal{X}~\mbox{with}~ Kx=y\right\}}\subset \mathcal{Y}
\end{equation*}
   
    ($\overline{U}$ denotes the closure of the subspace $U$ in $\mathcal{Y}$)} as well as a sequence of singular values $\sigma_1\ge \sigma_2 \ge ...\searrow 0$ such that $Kv_j=\sigma_j u_j$ and $K^*u_j=\sigma_j v_j$ for all $j\in\N$. The spectral cut-off regularisation yields a family of approximations of $x^\dagger=K^+y^\dagger= \sum_{j=1}^\infty\frac{(y^\dagger,u_j)}{\sigma_j}v_j$ through 

$$x_k^\delta:=\sum_{j=1}^k \frac{(y^\delta,u_j)}{\sigma_j}v_j.$$

 Note that for many problems the singular value decomposition of the forward operator is not known and often hard to approximate numerically. { Ways to extend the following approach to more practical regularisation methods and discretisation schemes are mentioned in Section \ref{sec5}}.
	
The central objective of the paper is to determine a good value for the truncation level $k$ for unknown $\delta$. From a practical point of view the desire to solve the inverse problem without knowledge of $\delta$ is perfectly understandable, however, delicate from the mathematical side due to the famous Bakushinskii veto \cite{bakushinskii1984remarks}. This result rigorously formalises the longstanding paradigm in inverse problems, namely that those are only solvable if additional information of the size of the noise is available. Precisely, it states that in the classical deterministic setting, where, different to here, $y^\delta=y^\dagger+\delta \xi_\delta$ with $\xi_\delta\in \mathcal{Y}, \|\xi_\delta\|\le 1$, it is impossible to solve the equation without knowledge of $\delta$ for arbitrary error directions $\xi_\delta$, and this is exactly the reason why noise level free methods are called 'heuristic'. Convergence results for noise level free methods in the classical setting are therefore subject to a noise restricted analysis, see Kindermann \& Neubauer \cite{kindermann2008convergence}, i.e.,  the additional assumption that the $\xi_\delta$ are elements of some adequate subset of $\mathcal{Y}$ has to be imposed. In the stochastic setting a possible workaround for the Bakushinskii veto is to estimate the noise level using multiple data sets $y^\delta_1, y^\delta_2,...\in \mathcal{Y}$ see Harrach et al. \cite{harrach2020beyond,10.1093/imanum/drab098} and \cite{jahn2021modified} by the author.  The usage of multiple measurements for the optimal solution of an inverse problem is classic and has been considered earlier by Mair \& Ruymgaart \cite{mair1996statistical} and  by Math\'{e} \& Pereverzev \cite{mathe2006regularization, mathe2017complexity}. Different to that, in this article we present a method which only uses one measurement $y^\delta$.

 The main difference of the white noise setting (see Bissantz et al. \cite{bissantz2007convergence} and Cavalier \cite{cavalier2011inverse}) to the classical setting of deterministic bounded noise is that the measurement $y^\delta$ is no longer an element of the Hilbert space, since its variance is unbounded $\E\|y^\delta-y^\dagger\|^2 =\sum_{j=1}^\infty\E[(y^\delta-y^\dagger,u_j)^2] = \sum_{j=1}^\infty \delta^2 = \infty$. Because of that most classic methods cannot be applied in a straight forward fashion in the white noise scenario and often methods originating from statistics, as, for example, Cross validation from Wahba \cite{wahba1977practical}, empirical risk minimisation (Cavalier et al. \cite{cavalier2002oracle}) or Lepski's method \cite{lepskii1991problem} (see Math\'{e} \& Pereverzev \cite{mathe2006regularization}) are used. Still, those methods typically require knowledge of $\delta$ or do not work for general compact operators, see Lucka et al. \cite{lucka2018risk}.

Due to the above mentioned reminiscence, regarding heuristic methods for statistical inverse problems only few results are available. An exemption is Bauer \& Rei\ss{} \cite{bauer2008regularization}, where remarkably, optimal convergence of the quasi-optimality criterion, dating back to Tikhonov \cite{tikhonov1963solution}, under Gaussian white noise in a Bayesian setup is presented.
Hereby, some drawbacks are that only a rather specific setup is covered regarding the spectral properties of the forward operator, the distribution of the white noise and the source condition. We will consider a related setup in Theorem \ref{th2} below, where we quantify the probability that the optimal error bound holds. Note that for the implementation of the method from Bauer \& Rei\ss{} additional subsampling functions have to be chosen, tailored to the type of ill-posedness and source condition, which require at least some knowledge on the underlying ground truth. The method presented in this manuscript might be a bit more user-friendly and moreover applicable in more general settings. Also, the quasi-optimality criterion has been investigated in Kindermann et al. \cite{kindermann2018quasi} for stochastic noise with finite variance, which allows to perform a classical analysis. 

Another approach to handle the unboundedness of the noise, well-known from statistical inverse problems, is to smooth the data and equation. This has been done for heuristic rules under weakly bounded deterministic noise by Kindermann \& Raik \cite{kindermann2019heuristic} recently. However, presmoothing is usually not the first choice from a numerical view point since it yields a rapid increase of the degree of ill-posedness in first place.

In the recent publications \cite{jahn2021optimal, jahn2022probabilistic} by the author of this manuscript an alternative approach to handle the white noise was presented, which constitutes of using adaptively a combination of regularisation and discretisation for the discrepancy principle due to Phillips \cite{phillips1962technique} and Morozov \cite{Morozov:1966}, one of the most popular nonheuristic methods. Here we apply the very same ideas to the heuristic discrepancy principle {(see, e.g., the doctoral thesis \cite{raik2020linear}, Definition 7)}, which was originally introduced by Hanke \& Raus \cite{hanke1996general}  for the iterative Landweber method . The heuristic discrepancy principle for choosing $k$ is defined as follows

$$k^\delta_{\rm cHD}:=\arg\min_{k\in\N}\Psi(k,y^\delta) := \arg\min_{k\in\N} \frac{1}{\sigma_k}\sqrt{\sum_{j=k}^\infty(y^\delta,u_j)^2}.$$

Due to the unboundedness of the white noise the above is not applicable, since the right hand side is infinity for all $k\in\N$. We therefore introduce a discretisation parameter $m\in\N$ and define accordingly a discretised variant

\begin{equation}\label{eq00}
k^\delta_{\rm HD}(m):=\arg\min_{k\le \frac{m}{2}}\Psi_m(k,y^\delta)=\arg\min_{k\le \frac{m}{2}} \frac{1}{\sigma_k}\sqrt{\sum_{j=k}^m(y^\delta,u_j)^2}.
\end{equation}

The reason that we restrict to $k\le m/2$ for the minimisation instead of, say, $k\le m$ is to avoid random fluctuations of few components to dominate. The particular choice of $m/2$ as the upper bound is for convenience, other choices are possible as well (as long as it is taken care of that the randomness of single components is negligible).  Note that in any case only a part of the coefficients is used for the construction of the approximation, while the other part is just used for validation. This is in the spirit of a cross-validation setup, and a related approach in the context of semi-supervised learning has been analysed by Caponnetto \& Yao \cite{caponnetto2010cross}. 

In order to determine the final approximation we have to choose the discretisation level $m$. For that we follow exactly the authors' paradigm from  \cite{jahn2021optimal, jahn2022probabilistic} and set

\begin{equation}\label{eq0}
	k^\delta_{\rm HD}:=\max_{m\in\N} k^\delta_{\rm HD}(m).
\end{equation}

	 \begin{remark}\label{rem1}
	 	In a practical setup the maximisation takes place over a finite set limited by the discretisation dimension. Still, this might be intractably large and the following property  is useful for the practical maximisation. It assures that $k^\delta_{\rm HD}(m)$ is bounded by a constant (independent of $\delta$) for $m$ large enough with a probability converging to $1$ as $\delta\to0$. Precisely, for $K:=\min\left\{k\in\N~:~\sigma_k<\sigma_1\right\}$ it holds that
	 	{
	 	\begin{equation*}
	 	\mathbb{P}\left(\arg\min_{k\le \frac{m}{2}}\Psi_m(k,y^\delta)< K,~ \forall m\ge m_\delta\right)\to 1
	 	\end{equation*}
	 }
	 	as $\delta\to0$, for suitable large $m_\delta$. This assertion will be proven below {in Section \ref{sec:rem1}}. See Remark 1.1 of \cite{jahn2021modified} and Section 3 of \cite{jahn2022probabilistic} on how to use this for the practical implementation of the modified (nonheuristic) discrepancy principle. The issues  also apply here.
	 	
	 	\end{remark}
 	
We will formulate the main results in Section \ref{sec2} and prove them in Section \ref{sec3}. The paper closes with some numerical experiments in Section \ref{sec4}.

\section{Results}\label{sec2}
We formulate the main result, which shows convergence in probability of the method for general ill-posed $K$ and general (non-Gaussian) white noise. 

\begin{theorem}\label{th1}
	For any compact operator $K$ with nonclosed range and any admissible data $y^\dagger\in\mathcal{R}(K) \cup \mathcal{R}(K)^\perp$  it holds that for all $\varepsilon>0$ that
	
	\begin{equation*}
		\mathbb{P}\left(\|x_{k^\delta_{\rm HD}}^\delta - x^\dagger\|\le \varepsilon\right)\to 1
	\end{equation*}
	
	as $\delta\to0$, i.e., $x_{k^\delta_{\rm HD}}$ converges to $x^\dagger$ in probability.
\end{theorem}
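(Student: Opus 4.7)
The plan is to decompose the reconstruction error into bias and variance and control each using the min--max structure of $k^\delta_{\rm HD}$. By orthonormality of $(v_j)$,
\[
\|x^\delta_k - x^\dagger\|^2 = \sum_{j>k}\frac{(y^\dagger,u_j)^2}{\sigma_j^2} + \delta^2 \sum_{j=1}^k\frac{(Z,u_j)^2}{\sigma_j^2},
\]
where the first (bias) term is deterministic, monotonically decreasing in $k$, and converges to $0$ as $k\to\infty$ since $x^\dagger\in\mathcal{X}$. In the complementary case $y^\dagger\in\mathcal{R}(K)^\perp$ one has $(y^\dagger,u_j)=0$ for every $j$ and $x^\dagger=0$, so the bias vanishes identically. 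It therefore suffices to show (i) that for every $\vare>0$ there is a $k_\vare$ with $\PP(k^\delta_{\rm HD}\ge k_\vare)\to1$ as $\delta\to0$, controlling the bias, and (ii) $\delta^2 \sum_{j=1}^{k^\delta_{\rm HD}}(Z,u_j)^2/\sigma_j^2 \to 0$ in probability, controlling the variance.

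For (i), I would fix $k_\vare$ so that $\sum_{j>k_\vare}(y^\dagger,u_j)^2/\sigma_j^2<\vare$ (possibly enlarging $k_\vare$ to an index with nonzero projection of $y^\dagger$) and exhibit a deterministic $m_\delta\in\N$ with $m_\delta\to\infty$ and $\delta^2 m_\delta\to 0$; then I would establish $\PP(k^\delta_{\rm HD}(m_\delta)\ge k_\vare)\to1$, which by $k^\delta_{\rm HD}=\max_m k^\delta_{\rm HD}(m)$ is sufficient. For every $k<k_\vare$, the functional $\Psi_{m_\delta}(k,y^\delta)^2=\sigma_k^{-2}\sum_{j=k}^{m_\delta}(y^\delta,u_j)^2$ retains a signal chunk $\sigma_k^{-2}\sum_{j=k}^{k_\vare-1}(y^\dagger,u_j)^2$ which is bounded away from zero, while by the white-noise moment identities $\E(Z,y)=0$ and $\E[(Z,y)(Z,y')]=(y,y')$, Chebyshev's inequality forces $\delta^2\sum_{j=k}^{m_\delta}(Z,u_j)^2$ to concentrate around $\delta^2(m_\delta-k+1)\to 0$. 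Hence $\Psi_{m_\delta}(k,y^\delta)>\Psi_{m_\delta}(k_\vare,y^\delta)$ for every such $k$ with probability tending to one.

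For (ii), the main obstacle, I would exploit the minimality of $\Psi_{m^\star}$ at the random maximiser $m^\star$ of $k^\delta_{\rm HD}(\cdot)$. By the argmin property,
\[
\frac{1}{\sigma_{k^\delta_{\rm HD}}^2}\sum_{j=k^\delta_{\rm HD}}^{m^\star}(y^\delta,u_j)^2 \le \frac{1}{\sigma_k^2}\sum_{j=k}^{m^\star}(y^\delta,u_j)^2
\]
for every $k\le m^\star/2$. Evaluated at a small comparison $k$ and combined with $\|y^\dagger\|<\infty$, the right-hand side is an a priori controllable quantity. Remark~\ref{rem1} confines $m^\star$ to a manageable range, allowing a uniform-in-$m$ concentration argument on the noise partial sums $\sum_{j=k}^{m}(Z,u_j)^2$, and via the monotonicity $\sigma_j\ge\sigma_{k^\delta_{\rm HD}}$ for $j\le k^\delta_{\rm HD}$, this translates the tail bound into the head-side bound $\delta^2 \sum_{j=1}^{k^\delta_{\rm HD}}(Z,u_j)^2/\sigma_j^2\to 0$ in probability.

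The principal difficulty is precisely this coupling: $\Psi_m$ measures the noise at tail indices $j\ge k$, whereas the quantity to bound is a singularly $1/\sigma_j^2$-weighted sum over head indices $j\le k$. Bridging the two requires simultaneously the minimum-over-$k$ and maximum-over-$m$ features of $k^\delta_{\rm HD}$, together with the uniform-in-$m$ concentration over the admissible range afforded by Remark~\ref{rem1}. It is also here that the argument genuinely exploits the random, white-noise structure of $Z$, circumventing the Bakushinskii obstruction present in the deterministic setting.
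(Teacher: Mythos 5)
Your overall architecture matches the paper's: the exact bias--variance decomposition, a lower bound on $k^\delta_{\rm HD}$ to control the bias, and an upper bound to control the variance. Step (i) is workable in spirit, but comparing against $k_\vare$ itself does not close: for $k<k_\vare$ the prefactor $1/\sigma_k$ is \emph{smaller} than $1/\sigma_{k_\vare}$, and the residual signal $\sum_{j\ge k_\vare}(y^\dagger,u_j)^2$ sitting inside $\Psi_{m_\delta}(k_\vare,\cdot)$ need not be small compared with your "signal chunk" $\sum_{j=k}^{k_\vare-1}(y^\dagger,u_j)^2$, which can even be negligible for $k$ just below $k_\vare$. The paper fixes this by lower-bounding $\Psi_m(k,\cdot)\ge\sigma_1^{-1}\bigl(\sum_{j=k_\vare}^m(y^\delta,u_j)^2\bigr)^{1/2}\gtrsim\nu$ uniformly for $k\le k_\vare$ and comparing against a \emph{deeper} index $k_{\vare,\nu}$ whose residual signal is at most $\nu/8$.

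The genuine gap is step (ii), which you yourself flag as "the principal difficulty" without resolving it. The missing idea is the deterministic threshold $k_{\delta,\varepsilon}:=\max\{k:\ k\delta^2/\sigma_k^2\le\varepsilon^2/4\}$: one proves $k^\delta_{\rm HD}\le k_{\delta,\varepsilon}$ on the good event, after which the head-side sum is bounded by $\sigma_{k_{\delta,\varepsilon}}^{-2}\sum_{j\le k_{\delta,\varepsilon}}(y^\delta-y^\dagger,u_j)^2\approx k_{\delta,\varepsilon}\delta^2/\sigma_{k_{\delta,\varepsilon}}^2\le\varepsilon^2/4$; no "translation" from tail to head indices is required. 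The upper bound itself follows because for $k>k_{\delta,\varepsilon}$ the noise part of $\Psi_m(k,\cdot)$, of size $\sqrt{m-k}\,\delta/\sigma_k\ge\sqrt{m/2}\,\delta/\sigma_{k_{\delta,\varepsilon}+1}$, simultaneously dominates (by the defining property of $k_{\delta,\varepsilon}$) both the constant $\varepsilon/16$ and the noise level $\sqrt{m}\,\delta/\sigma_{k_\vare}$ appearing in $\Psi_m(k_\vare,\cdot)$, for every $m\ge 2k_{\delta,\varepsilon}$; for $m<2k_{\delta,\varepsilon}$ the constraint $k\le m/2$ bounds the argmin trivially. Two further problems with your sketch: Remark~\ref{rem1} cannot "confine $m^\star$ to a manageable range" --- it controls $k^\delta_{\rm HD}(m)$ only for $m$ beyond a threshold of order $\delta^{-2}$, leaving the maximum over all smaller $m$ (hence truncation indices up to order $\delta^{-2}$) untouched. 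And since the maximum over $m$ is unrestricted, pointwise Chebyshev at a single deterministic $m_\delta$ does not suffice for the upper bound; one needs concentration of $\sum_{j=k+1}^m(y^\delta-y^\dagger,u_j)^2$ \emph{uniformly} over all $m\ge m_\delta$ and $k\le m/2$, which the paper obtains from Doob's maximal inequality applied to the backward martingale $\frac1l\sum_{j\le l}(\delta^{-2}(y^\delta-y^\dagger,u_j)^2-1)$ (Proposition~\ref{prop1}); this is also the only place where the white-noise structure enters, and it uses just second moments.
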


We stress that Theorem \ref{th1} explains that the Bakushinskii veto  does not apply in the white noise setting in full generality since clearly no knowledge of $\delta$ is needed for the determination of the final truncation index $k^\delta_{\rm HD}$. The type of convergence in Theorem \ref{th1} is weaker then convergence in $L^2$ (called convergence in mean integrated square error in this context), which is more commonly used in statistical inverse problems. The following counter example shows that convergence of the mean integrated square error does not hold here, even if the (component) distribution of the noise is bounded.

\begin{lemma}\label{ex1}
	For $x^\dagger=0$, let $\sigma_j^2=e^{-j}$ and assume that the white noise has distribution $\mathbb{P}\left((Z,y) = 0\right)=1/2$, $\mathbb{P}\left((Z,y)=\pm \sqrt{2}\right)=1/4$. Then, it holds that 
	
	$$\E\left[\left\| x^\delta_{k^\delta_{\rm HD}}-x^\dagger\right\|^2\right] \ge \frac{1}{4\delta}\to\infty$$
	
	as $\delta\to0$, i.e., $x^\delta_{\rm HD}$ does not converge in mean integrated square error.
\end{lemma}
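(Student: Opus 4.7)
\medskip
The plan is to exhibit, for every sufficiently large integer $k$, an event of probability of order $2^{-k}$ on which $k^\delta_{\rm HD}\ge k$ and on which the reconstructed norm is at least of order $\delta^2 e^{k-1}$, then trade the exponentially small probability against the exponentially large contribution.

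Since $x^\dagger=0$ one has $(y^\delta,u_j)=\delta(Z,u_j)$ with the $(Z,u_j)$ i.i.d., $\mathbb{P}((Z,u_j)=0)=\tfrac12$ and $(Z,u_j)^2\in\{0,2\}$. For $k\ge 2$ I would set
\begin{equation*}
A_k:=\{(Z,u_{k-1})\neq 0\}\cap\bigcap_{j=k}^{2k}\{(Z,u_j)=0\},
\end{equation*}
which has probability $2^{-(k+2)}$ by independence. On $A_k$ one has $\Psi_{2k}(k,y^\delta)=0$, whereas for every $k'<k$ the summation range of $\Psi_{2k}(k',y^\delta)$ still contains the nonzero term $(Z,u_{k-1})^2=2$, so $\Psi_{2k}(k',y^\delta)>0$. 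Hence $k$ is the unique minimiser of $\Psi_{2k}(\cdot,y^\delta)$ over $\{1,\ldots,k\}$, and therefore $k^\delta_{\rm HD}\ge k^\delta_{\rm HD}(2k)=k$ on $A_k$.

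Now $\|x^\delta_k\|^2=\delta^2\sum_{j=1}^k e^{j}(Z,u_j)^2$ is monotone in $k$, so keeping only the $j=k-1$ summand gives $\|x^\delta_{k^\delta_{\rm HD}}\|^2\ge 2\delta^2 e^{k-1}$ on $A_k$, and taking expectations yields
\begin{equation*}
\mathbb{E}\|x^\delta_{k^\delta_{\rm HD}}\|^2 \;\ge\; 2\delta^2 e^{k-1}\cdot 2^{-(k+2)} \;=\; \tfrac{\delta^2}{4}(e/2)^{k-1}.
\end{equation*}
Since $e/2>1$, for every $\delta>0$ one may choose $k$ large enough that $(e/2)^{k-1}\ge \delta^{-3}$, which delivers the announced bound $\mathbb{E}\|x^\delta_{k^\delta_{\rm HD}}\|^2\ge 1/(4\delta)$; in fact, summing over the disjoint events $A_k$ shows the expectation is already infinite for every $\delta>0$.

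The only delicate point is the argmin identification on $A_k$: one must rule out that some $k'<k$ accidentally also achieves $\Psi_{2k}(k',y^\delta)=0$, which is the sole purpose of the extra constraint $(Z,u_{k-1})\neq 0$ in the definition of $A_k$. The particular choice $m=2k$ is forced by the admissibility condition $k\le m/2$ built into~(\ref{eq00}).
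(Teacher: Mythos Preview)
Your proof is correct and mirrors the paper's argument almost exactly: the paper's event $\Omega_\delta'$ is precisely your $A_k$ with $k=m_\delta:=\lceil -3\log\delta/(1-\log 2)\rceil+1$, which is just the explicit choice of $k$ satisfying your condition $(e/2)^{k-1}\ge\delta^{-3}$. One small correction to your closing aside: the events $A_k$ are \emph{not} pairwise disjoint (e.g.\ $A_2$ and $A_{10}$ can occur simultaneously), but your conclusion that the expectation is infinite for every $\delta>0$ still follows, since your bound $\mathbb{E}\|x^\delta_{k^\delta_{\rm HD}}\|^2\ge\tfrac{\delta^2}{4}(e/2)^{k-1}$ holds for every $k\ge 2$ and the right-hand side is unbounded in $k$.
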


 It is known that the heuristic discrepancy principle does not achieve optimal convergence rates in the standard setting of general source conditions, unless an additional regularity condition (sometimes called self-similarity) is fulfilled by the unknown ground truth $x^\dagger$, see e.g. Kindermann \& Raik \cite{kindermann2019heuristic}. This is different to the performance of nonheuristic methods, as e.g., the discrepancy principle which does not need self-similarity. We illustrate the difference of the assumptions for the basic setting of a polynomially ill-posed forward operator $K$. Here, a simple instance of a self-similarity condition is when the Fourier coefficients of the unknown fulfill $(x^\dagger,v_j)^2=j^{-\eta}$ for some $\eta>1$.  Contrary, a classical source condition would only require $(x^\dagger,v_j)^2=j^{-\eta'}(\xi,v_j)^2$, where $\xi$ is an arbitrary element in $\mathcal{X}$, which is clearly a much weaker assumption, as it  {  imposes solely an upper bound upon the coefficients $(x^\dagger,v_j)^2$, while the self-similiarity condition additionally enforces a lower bound also}.
	
	 The following result employs a setting where optimal convergence is achieved for the modified nonheuristic discrepancy principle in the sense of an oracle inequality, i.e., the method attains the minimal possible error (up to a constant factor) asymptotically. Hereby, we relax the aforementioned simple self-similarity condition in that we allow for (random) deviations. The setup is borrowed from Bauer \& Rei\ss{} \cite{bauer2008regularization} and constitutes of polynomially ill-posed operators in a Bayesian-like framework. It is assumed that the components of the unknown solution $x^\dagger$ are randomly sampled, with respective decreasing variance and can be interpreted as kind of a self-similarity. For a general introduction to Bayesian inverse problems we refer to Kaipio \& Somersalo \cite{kaipio2006statistical}.

\begin{theorem}\label{th2}
	Assume that there exist $q>0$ and $\eta>1$ such that $\sigma_j^2=j^{-q}$ and $(x^\dagger,v_j) = j^{-\frac{\eta}{2}} X_j$ for all $j\in\N$, where the $(X_j)_{j\in\N}$ are either deterministic and equal to $1$ or i.i.d. standard Gaussian. Moreover, assume that $Z$ is Gaussian white noise. Then for all $0< \delta\le 1$

	\begin{equation}\label{eqth2}
	\mathbb{P}\left( \|x^\delta_{k^\delta_{\rm HD}}-x^\dagger\|\le C_{q,\eta} \min_{k\in\N}\|x^\delta_k-x^\dagger\|\right)\ge1-4 e^{-p_{q,\eta} \delta^\frac{2(1-\eta)}{(q+\eta-1)(q+\eta)}}
	\end{equation}
	
where the constants $p_{q,\eta}$ and $C_{q,\eta}$ are given in \eqref{prob} and \eqref{bound} below.  That is, up to a constant the minimal error is attained with overwhelming probability.

\end{theorem}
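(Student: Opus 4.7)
The strategy is to control the functional $\Psi_m(k,y^\delta)$ on a high-probability event so that its minimiser over $k$ tracks the oracle index within a constant factor. As a first step I would expand
\begin{equation*}
\sigma_k^2\,\Psi_m(k,y^\delta)^2 = \sum_{j=k}^m \sigma_j^2(x^\dagger,v_j)^2 + 2\delta\sum_{j=k}^m \sigma_j(x^\dagger,v_j)(Z,u_j) + \delta^2\sum_{j=k}^m(Z,u_j)^2,
\end{equation*}
and plug in $\sigma_j^2=j^{-q}$ and $(x^\dagger,v_j)^2=j^{-\eta}X_j^2$. Balancing bias $\sim k^{1-\eta}$ against variance $\sim \delta^2 k^{q+1}$ identifies the oracle scale $k^\star\sim \delta^{-2/(q+\eta)}$ with oracle squared error $\sim \delta^{2(\eta-1)/(q+\eta)}$. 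I would also note the explicit identity
\begin{equation*}
\|x^\delta_k-x^\dagger\|^2 = \sum_{j>k}j^{-\eta}X_j^2 + \delta^2\sum_{j\le k}j^q (Z,u_j)^2
\end{equation*}
to be compared with $\Psi_m$ term by term.

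The second step is the random analysis. Both $\sum_{j=k}^m(Z,u_j)^2$ and (in the Gaussian $X_j$ case) $\sum_{j=k}^m j^{-q-\eta}X_j^2$ are weighted $\chi^2$-sums, so I would apply Laurent--Massart type Bernstein bounds to compare each to its expectation up to a factor $\tfrac12,2$, and handle the cross term by a sub-Gaussian tail bound. In the deterministic $X_j\equiv 1$ case the bias sums are exactly computable. A dyadic peeling in both $k$ and $m$ (restricted to the window where $k,m$ actually influence the minimiser — thanks to Remark~\ref{rem1} this window is essentially $[1,Ck^\star]$) combined with a single union bound gives an event $\Omega_\delta$ of probability $1-4e^{-p_{q,\eta}\delta^{2(1-\eta)/((q+\eta-1)(q+\eta))}}$ on which all the chi-squared and cross-term estimates hold simultaneously; tuning the peeling scale against the Bernstein deviation $t$ is what forces the exponent $2(1-\eta)/((q+\eta-1)(q+\eta))$.

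On $\Omega_\delta$ I would then turn the deterministic part of the argument. Using the lower bounds on both the bias and the noise chi-squared sum, I would show that for each $m$ in the relevant range the minimiser $k^\delta_{\rm HD}(m)$ lies in $[c_1 k^\star, c_2 k^\star]$: for $k\ll k^\star$ the bias contribution to $\Psi_m$ dominates and forces it above the oracle, while for $k\gg k^\star$ the noise term $\delta^2(m-k)/\sigma_k^2$ blows up. The same two-sided comparison shows $\Psi_m(k^\delta_{\rm HD}(m),y^\delta)\lesssim \min_k \|x^\delta_k-x^\dagger\|$. Finally, taking $k^\delta_{\rm HD}=\max_m k^\delta_{\rm HD}(m)$ as in \eqref{eq0} keeps the truncation index inside $[c_1k^\star, c_2 k^\star]$, so substituting back into the decomposition of $\|x^\delta_{k^\delta_{\rm HD}}-x^\dagger\|^2$ and using monotonicity of bias and noise in $k$ (up to the constants obtained on $\Omega_\delta$) yields the claimed oracle inequality with an explicit constant $C_{q,\eta}$.

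\textbf{Main obstacle.} The delicate point is the lower bound on $\Psi_m$ for $k$ somewhat smaller than $k^\star$: one needs the tail bias $\sum_{j\ge k} j^{-\eta}X_j^2$ to be comparable to its expectation \emph{uniformly} over a dyadic net of $k$'s extending down from $k^\star$. This is precisely where the self-similarity hypothesis on $X_j$ pays off, and getting the chi-squared concentration here without sacrificing the exponent $\delta^{2(1-\eta)/((q+\eta-1)(q+\eta))}$ in \eqref{eqth2} — the exponent is dictated by matching the Bernstein deviation at $k\sim k^\star$ to the critical gap between the bias at $k^\star$ and at a neighbouring dyadic level — is the central calibration of the proof.
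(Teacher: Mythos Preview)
Your overall architecture matches the paper's: define a high-probability event $\Omega_\delta$, show on it that $c\,k_\delta\le k^\delta_{\rm HD}\le C\,k_\delta$ with $k_\delta=\lceil\delta^{-2/(q+\eta)}\rceil$, and then read off the oracle inequality from the bias--variance decomposition. Two technical choices differ. First, the paper never expands the square or isolates cross terms; it works throughout with the triangle and reverse triangle inequalities on $\sqrt{\sum_{j=k}^m(y^\delta,u_j)^2}$, which is simpler and makes the sub-Gaussian cross-term bound unnecessary. Second, the paper does not use Laurent--Massart plus dyadic peeling: it controls $\sup_{m\ge M}\big|\tfrac1m\sum_{j=1}^m(X_j^2-1)\big|$ directly by Doob's inequality for the reverse submartingale $\exp\big(\lambda\tfrac1m\sum(X_j^2-1)\big)$ (Proposition~\ref{prop5}), which yields uniform-in-$m$ control from a single concentration estimate and avoids any union bound.

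There is, however, a genuine gap in your identification of where the exponent $\delta^{2(1-\eta)/((q+\eta-1)(q+\eta))}$ comes from. It is \emph{not} determined by concentration at $k\sim k^\star=k_\delta$. The bottleneck is the \emph{lower} bound $k^\delta_{\rm HD}>c\,k_\delta$: to show $\Psi_{2k_\delta}(k,y^\delta)>\Psi_{2k_\delta}(k_\delta,y^\delta)$ for \emph{all} $k\le c\,k_\delta$, one must lower bound the bias contribution at small $k$, and for this the paper introduces the auxiliary scale
\[
\kappa_\delta:=\big\lceil k_\delta^{(\eta-1)/(q+\eta-1)}\big\rceil-1,
\]
chosen precisely so that $\kappa_\delta^{(1-q-\eta)/2}\asymp k_\delta^{(1-\eta)/2}$. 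For $k\le c\,\kappa_\delta$ the argument replaces $\sum_{j\ge k}j^{-(q+\eta)}X_j^2$ by the block $\sum_{j=c\kappa_\delta+1}^{2c\kappa_\delta}X_j^2$, and it is the concentration of \emph{this} sum---at scale $\kappa_\delta$, not $k_\delta$---that sets the probability bound $e^{-c\,\kappa_\delta}=e^{-c\,\delta^{2(1-\eta)/((q+\eta-1)(q+\eta))}}$. If you try to run your argument demanding concentration only at $k\sim k^\star$, you will either obtain a stronger (hence incorrect for this theorem) exponent $e^{-c\,k_\delta}$ while failing to control $\Psi$ for $k\ll k_\delta$, or you will be forced to push the peeling down to $k\sim\kappa_\delta$ and discover this scale anyway. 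Your ``main obstacle'' paragraph senses the difficulty but misplaces its resolution; the calibration is $\kappa_\delta$, not a dyadic gap at $k^\star$.
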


Theorem \ref{th1} and \ref{th2} above show that the here proposed method is reasonable and allows for regularisation without knowledge of the noise level. This will be confirmed numerically in Section \ref{sec4} below. Clearly, it would be interesting to apply the approach to other heuristic methods as, e.g., the L-curve (Hansen \cite{hansen1992analysis}) the Hanke-Raus-rule \cite{hanke1996general} or the aforementioned quasi-optimality criterion.

	


\section{Proofs}\label{sec3}

We start with a central proposition which controls the discretised measurement errors simultaneously.

\begin{proposition}\label{prop1}
	For $\varepsilon'>0$ and $\kappa\ge \left(\frac{3}{2}+\frac{3}{2\varepsilon'}\right)^2$ it holds that
	
	\begin{align*}
	&\mathbb{P}\left(\left|\sum_{j=k+1}^m(y^\delta-y^\dagger,u_j)^2-(m-k)\delta^2\right|\le \varepsilon'(m-k)\delta^2,~\forall m\ge \kappa,~k\le m/2\right)\\
	\ge\qquad&1-\frac{1}{\varepsilon'}\E\left[\left|\frac{1}{\sqrt{\kappa}}\sum_{j=1}^{\sqrt{\kappa}}\left(\delta^{-2}(y^\delta-y^\dagger,u_j)^2-1\right)\right|\right].
	\end{align*}
	
	Moreover, 
	
	\begin{align*}
	 \lim_{\kappa\to \infty}\E\left[\left|\frac{1}{\sqrt{\kappa}}\sum_{j=1}^{\sqrt{\kappa}}\left(\delta^{-2}(y^\delta-y^\dagger,u_j)^2-1\right)\right|\right] = 0.
	\end{align*}

\end{proposition}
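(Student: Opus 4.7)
The plan is to reduce the event to a single-indexed maximal inequality on partial averages of the centered sequence $V_j := \delta^{-2}(y^\delta - y^\dagger, u_j)^2 - 1 = (Z, u_j)^2 - 1$, which has zero mean by the second white-noise axiom. Setting $S_n := \sum_{j=1}^n V_j$ and $T_n := S_n/n$, the identity $\sum_{j=k+1}^m (y^\delta - y^\dagger, u_j)^2 - (m-k)\delta^2 = \delta^2(S_m - S_k)$ shows that the proposition's event is exactly $\{|S_m - S_k| \le \varepsilon'(m-k)\text{ for all }m\ge \kappa,\,k\le m/2\}$.

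The key deterministic reduction exploits $k \le m/2$, which gives both $m - k \ge m/2$ and $k \le m-k$, so that $|S_m - S_k|/(m-k) \le 2|T_m| + |T_k|$ (with the $T_k$-term absent when $k=0$). Because $|T_k|$ can be wild for small $k$, I would split the range of $k$ at $N := \sqrt{\kappa}$: for $k\ge N$ the term $|T_k|$ is absorbed into $\sup_{n\ge N}|T_n|$, while for $k<N$ the refined estimate $|S_k|/(m-k) \le 2|S_k|/\kappa \le 2|T_k|/\sqrt{\kappa}$ applies (using $m \ge \kappa$ so $m-k \ge \kappa/2$ is macroscopic compared to the index $k<\sqrt{\kappa}$). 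The numerical hypothesis $\sqrt{\kappa}\ge 3/2 + 3/(2\varepsilon')$ is exactly what is needed for these two sub-cases to collapse into the single sufficient condition $\sup_{n\ge N}|T_n| \le \varepsilon'$.

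The probabilistic step is then the Markov-type maximal inequality $\PP(\sup_{n\ge N}|T_n| > \varepsilon') \le \E[|T_N|]/\varepsilon'$, which is the reverse-martingale maximal inequality applied to $(T_n)_{n\ge 1}$: by exchangeability of the white-noise coordinates, $T_n$ is a backward martingale with respect to the decreasing filtration $\mathcal{G}_n := \sigma(S_n, V_{n+1}, V_{n+2}, \ldots)$, so the usual $L^1$-maximal bound yields the stated probability with the sharp constant $1/\varepsilon'$. Combining with the deterministic reduction proves the first assertion. For the second assertion, the $V_j$ are centered and identically distributed, so $\E[|S_N/N|] \to 0$ as $N = \sqrt{\kappa} \to \infty$ is the $L^1$ law of large numbers, which in the setting here follows from $\E[(S_N/N)^2] \to 0$ (by the second-moment structure of $(Z,\cdot)$) together with Jensen's inequality.

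I expect the main obstacle to be the deterministic case split: matching the constant $1/\varepsilon'$ on the right-hand side (rather than some multiple like $3/\varepsilon'$) requires carefully balancing the two sub-cases, using the precise form of the hypothesis on $\sqrt{\kappa}$ to absorb the factor $3$ arising from the three-term estimate $2|T_m| + |T_k|$ (or the analogue for $k<N$) into a single threshold of $\varepsilon'$ for the sup on $\{n\ge N\}$.
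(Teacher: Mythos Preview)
Your overall architecture matches the paper's proof exactly: write $V_j=(Z,u_j)^2-1$, reduce the event to control of the centered partial sums $S_n$, invoke the reverse-martingale maximal inequality for $T_n=S_n/n$ (the paper imports this as Proposition~4.1 of \cite{jahn2022probabilistic}), and split the range of $k$ at $N=\sqrt{\kappa}$. The case $k\ge N$ is handled just as you describe, via $m+k\le 3(m-k)$.

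The gap is in your treatment of $k<N$. Your estimate $|S_k|/(m-k)\le 2|T_k|/\sqrt{\kappa}$ is arithmetically correct, but $|T_k|$ for $k<N$ is \emph{not} dominated by $\sup_{n\ge N}|T_n|$: for instance $|T_1|=|V_1|=|(Z,u_1)^2-1|$ is an unbounded random variable with no relation whatsoever to the tail averages. No choice of $\kappa$ makes $2|T_k|/\sqrt{\kappa}$ deterministically small, so the two sub-cases do not collapse into the single sufficient condition $\sup_{n\ge N}|T_n|\le\varepsilon'$ as you assert. What you are missing is the one structural feature the centered variables $V_j$ obscure: the original summands $(Z,u_j)^2=V_j+1$ are \emph{nonnegative}. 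The paper exploits this asymmetry instead of bounding $|S_k|$. For the upper bound one discards the first $k$ nonnegative terms,
\[
\sum_{j=k+1}^m(Z,u_j)^2\le\sum_{j=1}^m(Z,u_j)^2,
\]
which involves only $T_m$; for the lower bound one discards the nonnegative block between $k$ and $N$,
\[
\sum_{j=k+1}^m(Z,u_j)^2\ge\sum_{j=N+1}^m(Z,u_j)^2,
\]
which involves only $T_m$ and $T_N$. Both sides now live entirely on indices $\ge N$, and the hypothesis $\sqrt{\kappa}\ge\tfrac{3}{2}+\tfrac{3}{2\varepsilon'}$ is used to absorb the resulting deficit of order $\sqrt{\kappa}\,\delta^2$ against $(m-k)\delta^2\ge\tfrac{\kappa}{2}\delta^2$.

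Your anticipated obstacle about the constant is real but secondary: the paper's reduction naturally yields the threshold $\varepsilon'/3$ on the tail supremum, and indeed the cited Proposition~4.1 carries a factor $3/\varepsilon'$ in its probability bound. Do not expect the deterministic step to squeeze out that factor~$3$.
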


\begin{proof}[Proof of Proposition \ref{prop1}]
We rely on Proposition 4.1 from \cite{jahn2022probabilistic}, which states that

for

$$\Omega_\kappa:=\left\{ \left|\sum_{j=1}^l(y^\delta-y^\dagger,u_j)^2-l\delta^2\right|\le \frac{\varepsilon'}{3},~\forall l\ge \sqrt{\kappa}\right\}$$

it holds that 

$$\mathbb{P}\left(\Omega_\kappa\right)\ge 1 - \frac{3}{\varepsilon'}\E\left[\left|\frac{1}{\sqrt{\kappa}}\sum_{j=1}^{\sqrt{\kappa}}\left(\delta^{-2}(y^\delta-y^\dagger,u_j)^2-1\right)\right|\right]\stackrel{(\kappa\to\infty)}{\longrightarrow} 1$$

and whose proof consists of an application of Doob's extremal theorem to the backward martingale $\left(\frac{1}{l}\sum_{j=1}^l\left((y^\delta-y^\dagger,u_j)^2\delta^{-2}-1\right)\right)_{l\in\N}$.

From $k\le m/2$ it follows that $m+2k\le 2m$ and $k\le m-k$. Thus 

$$m+k\le 2m-k=2(m-k)+k\le 2(m-k)+(m-k)=3(m-k).
$$
 We distinguish the cases $k\le \sqrt{\kappa}$ and $k>\sqrt{\kappa}$.
  In the latter case we have that

\begin{align*}
&\sum_{j=k+1}^m(y^\delta-y^\dagger,u_j)^2\omk=\sum_{j=1}^m(y^\delta-y^\dagger,u_j)^2\omk -\sum_{j=1}^k(y^\delta-y^\dagger,u_j)^2\omk\\
\le &\left(1+\frac{\varepsilon'}{3}\right)m\delta^2 - \left(1-\frac{\varepsilon'}{3}\right)k\delta^2 = (m-k)\delta^2 + \frac{\varepsilon'}{3}(m+k)\delta^2\\
\le &(1+\varepsilon')(m-k)\delta^2 
\end{align*}
	
	and similarly
	
	\begin{align*}
&\sum_{j=k+1}^m(y^\delta-y^\dagger,u_j)^2\omk=\sum_{j=1}^m(y^\delta-y^\dagger,u_j)^2\omk -\sum_{j=1}^k(y^\delta-y^\dagger,u_j)^2\omk\\
\ge &\left(1-\frac{\varepsilon'}{3}\right)m\delta^2\omk - \left(1+\frac{\varepsilon'}{3}\right)k\delta^2\omk
\ge (1-\varepsilon')(m-k)\delta^2\omk.
	\end{align*}
	
	In the former case it holds that
	
	\begin{align*}
	\sum_{j=k+1}^m(y^\delta-y^\dagger,u_j)^2\omk&\le \sum_{j=1}^m(y^\delta-y^\dagger,u_j)^2\omk\le\left(1+\frac{\varepsilon'}{3}\right)m\delta^2\omk\\
	= &\left(1+\frac{\varepsilon'}{3}\right)(m-k)\delta^2 +\frac{\varepsilon'}{3} k\delta^2\le (1+\varepsilon')(m-k)\delta^2,
	\end{align*}
	
	since $m-k\ge k$, and
	
		\begin{align*}
	&\sum_{j=k+1}^m(y^\delta-y^\dagger,u_j)^2\omk\ge \sum_{j=\sqrt{\kappa}+1}^m(y^\delta-y^\dagger,u_j)^2\omk\\
	=&\sum_{j=1}^m(y^\delta-y^\dagger,u_j)^2\omk - \sum_{j=1}^{\sqrt{\kappa}}(y^\delta-y^\dagger,u_j)^2\omk\\
	\ge &\left(1-\frac{\varepsilon'}{3}\right)m\delta^2\omk - \left(1+\frac{\varepsilon'}{3}\right)\sqrt{\kappa}\delta^2\\
	\ge&\left(1-\frac{\varepsilon'}{3}\right)(m-k)\delta^2\omk-\left(1+\frac{\varepsilon'}{3}\right)\sqrt{\kappa}\delta^2\\
    =&\left(1-\frac{\varepsilon'}{3}\right)(m-k)\delta^2\omk -\frac{2\varepsilon'}{3}(m-k)\delta^2 \frac{\left(1+\frac{\varepsilon'}{3}\right)\sqrt{\kappa}}{\frac{2\varepsilon'}{3}(m-k)}\\
	\ge&\left(1-\frac{\varepsilon'}{3}\right)(m-k)\delta^2\omk - \frac{2\varepsilon'}{3}(m-k)\delta^2 \frac{\left(1+\frac{\varepsilon'}{3}\right)}{\frac{2\varepsilon'}{3}(\sqrt{\kappa}-1)}\ge (1-\varepsilon')(m-k)\delta^2\omk,
	\end{align*}
	
	since $\kappa\ge \left(\frac{3}{2}+\frac{3}{2\varepsilon'}\right)^2$.
	
\end{proof}

\subsection{Proof of Theorem \ref{th1}}

	We first define the 'nice' events where the error behaves regularly and thus allows for perfect control of the measurement error, subject to a small parameter $\varepsilon'>0$ and a sequence $(m_\delta)_{\delta>0}$ converging to $\infty$,  which will be specified below.
	
	\begin{align}\label{om}
		\Omega_{\delta}:&=\left\{\omega \in \Omega~:~\left|\sqrt{\sum_{j=k}^m(y^\delta - y^\dagger,u_j)^2(\omega)}-\sqrt{m-k}\delta\right|\le \varepsilon' \sqrt{m-k}\delta,\right.\\\notag
		&\qquad~~ \left.~\forall m\ge m_\delta, k\le \frac{m}{2}\right\}.
	\end{align}
	
	Proposition \ref{prop1} above yields $\mathbb{P}\left(\Omega_\delta\right)\to 1$ as $\delta\to 0$ which allows us to restrict our analysis to the events $\Omega_\delta$.  For the proof we set $\varepsilon':=1/5$ and $m_\delta:=\max(k_{\delta,\varepsilon},m_{\delta,\nu})$. The auxiliary quantities $k_{\delta,\varepsilon}, m_{\delta,\nu}, \nu$ are defined in \eqref{om1}, \eqref{om2} and below. We stress that in the end all quantities depend solely on the noise level $\delta$, on the targeted upper error bound $\varepsilon$, on the singular values of $K$ and on the unknown solution $x^\dagger$ and data $y^\dagger$. Note that in fact the proof works for any $\varepsilon'<1/4$, which would change the concrete constants.

	We start by showing that the proposed rule \eqref{eq0} is well-defined. Clearly, $k^\delta_{\rm HD}(m)$ is well-defined for all $m\in\N$ and the same holds true for $k^\delta_{\rm HD}$ (on $\Omega_\delta$), as follows directly from the next proposition. In fact the proposition already  guarantees stability of the proposed method in that it upper bounds the regularisation parameter appropriately.  We define

		\begin{equation}\label{om1}
		 k_{\delta,\varepsilon}:=\max\left\{k\in\N~:~\frac{k}{\sigma_k^2} \delta^2 \le \frac{\varepsilon^2}{4}\right\}.
		 \end{equation}

	\begin{proposition}\label{prop3}
It holds that
		
		$$k^\delta_{\rm HD}\omd \le k_{\delta,\varepsilon}$$
		
		for $\delta$ small enough.
		
	\end{proposition}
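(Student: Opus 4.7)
The plan is to verify $k^\delta_{\rm HD}(m)\le k_{\delta,\varepsilon}$ for every $m\in\N$ on $\Omega_\delta$, provided $\delta$ is small enough. For $m\le 2k_{\delta,\varepsilon}$ the constraint $k\le m/2$ in \eqref{eq00} already gives the bound, so I fix $m>2k_{\delta,\varepsilon}$ and argue by contradiction, supposing that $k^*:=k^\delta_{\rm HD}(m)>k_{\delta,\varepsilon}$. Then $k^*\le m/2$ and hence $m-k^*\ge k^*$, and by \eqref{om1} one has $(m-k^*)\delta^2/\sigma_{k^*}^2\ge k^*\delta^2/\sigma_{k^*}^2>\varepsilon^2/4$. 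The goal is to lower-bound $\Psi_m(k^*,y^\delta)$ on $\Omega_\delta$ and to produce a competing truncation $\tilde k\le k_{\delta,\varepsilon}$ with strictly smaller $\Psi_m$-value, contradicting the minimality of $k^*$ in \eqref{eq00}.

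For the lower bound, the reverse triangle inequality on $\Omega_\delta$ gives
\begin{equation*}
\sigma_{k^*}\Psi_m(k^*,y^\delta)\ge (1-\varepsilon')\sqrt{m-k^*}\,\delta-\sqrt{\sum_{j=k^*}^m(y^\dagger,u_j)^2}.
\end{equation*}
Because $(y^\dagger,u_j)=\sigma_j(x^\dagger,v_j)$ and $\sigma_j\le\sigma_{k^*}$ for $j\ge k^*$, the bias term divided by $\sigma_{k^*}$ is at most $\bigl(\sum_{j\ge k^*}(x^\dagger,v_j)^2\bigr)^{1/2}$, which tends to zero as $k^*\to\infty$; since $k_{\delta,\varepsilon}\to\infty$ as $\delta\to 0$, this is below $(1-\varepsilon')\varepsilon/8$ for $\delta$ small, and together with $(1-\varepsilon')\sqrt{m-k^*}\delta/\sigma_{k^*}>(1-\varepsilon')\varepsilon/2$ it yields $\Psi_m(k^*,y^\delta)>(1-\varepsilon')\varepsilon/4$.

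Producing a $\tilde k\le k_{\delta,\varepsilon}$ with $\Psi_m(\tilde k,y^\delta)<(1-\varepsilon')\varepsilon/4$ uniformly in $m$ is the crux and I would handle it by splitting on $m$. When $m$ is close to $2k_{\delta,\varepsilon}$, I take $\tilde k=k_{\delta,\varepsilon}$: the upper bound
\begin{equation*}
\Psi_m(k_{\delta,\varepsilon},y^\delta)\le\sigma_{k_{\delta,\varepsilon}}^{-1}\Biggl(\sqrt{\sum_{j\ge k_{\delta,\varepsilon}}(y^\dagger,u_j)^2}+(1+\varepsilon')\sqrt{m-k_{\delta,\varepsilon}}\,\delta\Biggr)
\end{equation*}
decomposes into a bias part that vanishes with $\delta$ and a noise part bounded by $(1+\varepsilon')\varepsilon\sqrt{(m-k_{\delta,\varepsilon})/k_{\delta,\varepsilon}}/2$ via \eqref{om1}. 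When $m$ is large, I take $\tilde k=1$ and exploit $\sigma_{k^*}/\sigma_1\le\sigma_{k_{\delta,\varepsilon}+1}/\sigma_1\to 0$: the lower bound on $\Psi_m(k^*,y^\delta)$ scales as $\sqrt m\,\delta/\sigma_{k^*}$, outgrowing the upper bound $\sigma_1^{-1}[(1+\varepsilon')\sqrt m\,\delta+\|y^\dagger\|]$ of $\Psi_m(1,y^\delta)$ on $\Omega_\delta$.

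The main obstacle is gluing the two regimes, since neither single choice of $\tilde k$ handles all $m>2k_{\delta,\varepsilon}$ at once. I would introduce a threshold $M_\delta$ of order $k_{\delta,\varepsilon}\sigma_1^2/\sigma_{k_{\delta,\varepsilon}+1}^2$ separating the two ranges and verify that the contradiction is valid in both; the slack needed comes precisely from $\sigma_{k_{\delta,\varepsilon}+1}/\sigma_1\to 0$ and from the tail $\bigl(\sum_{j\ge k_{\delta,\varepsilon}}(x^\dagger,v_j)^2\bigr)^{1/2}\to 0$ as $\delta\to 0$, both driven by $k_{\delta,\varepsilon}\to\infty$.
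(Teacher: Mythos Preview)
Your overall strategy is sound, but the ``small $m$'' regime has a genuine gap that the threshold $M_\delta$ does not repair. With $\tilde k=k_{\delta,\varepsilon}$ the noise part of the upper bound is
\[
\Psi_m(k_{\delta,\varepsilon},y^\delta)\ \gtrsim\ \frac{(1-\varepsilon')\sqrt{m-k_{\delta,\varepsilon}}\,\delta}{\sigma_{k_{\delta,\varepsilon}}},
\]
while your lower bound for $k^*>k_{\delta,\varepsilon}$ is of order $\sqrt{m-k^*}\,\delta/\sigma_{k^*}\le \sqrt{m}\,\delta/\sigma_{k_{\delta,\varepsilon}+1}$. The two quantities scale identically in $m$ and $\delta$; the only leverage would be the ratio $\sigma_{k_{\delta,\varepsilon}}/\sigma_{k_{\delta,\varepsilon}+1}$, which for general operators (e.g.\ polynomially decaying singular values) tends to~$1$. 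Concretely, already at $m=2k_{\delta,\varepsilon}+2$ the noise part of $\Psi_m(k_{\delta,\varepsilon},y^\delta)$ is $\approx(1+\varepsilon')\varepsilon/2$, which exceeds your fixed lower bound $(1-\varepsilon')\varepsilon/4$; and the refined $m$-dependent lower bound does no better because the singular-value ratio is uncontrolled. Hence neither comparison you propose yields the contradiction on the full interval $2k_{\delta,\varepsilon}<m\le M_\delta$.

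The paper closes this gap with a single $\delta$-independent comparison index $k_\varepsilon:=\min\{k:\sum_{j\ge k}(x^\dagger,v_j)^2\le\varepsilon^2/256\}$, used for \emph{all} $m>2k_{\delta,\varepsilon}$. The point is that $\sigma_{k_\varepsilon}$ is a fixed positive constant, so the lower bound $\Psi_m(k^*,y^\delta)\gtrsim \sqrt{m}\,\delta/\sigma_{k_{\delta,\varepsilon}+1}$ beats the noise part $\sqrt{m}\,\delta/\sigma_{k_\varepsilon}$ of $\Psi_m(k_\varepsilon,y^\delta)$ by the diverging factor $\sigma_{k_\varepsilon}/\sigma_{k_{\delta,\varepsilon}+1}$; and the bias part of $\Psi_m(k_\varepsilon,y^\delta)$ is at most $\varepsilon/16$, which is dominated by $\sqrt{m}\,\delta/\sigma_{k_{\delta,\varepsilon}+1}\ge\varepsilon/8$. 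In short, your idea to use a fixed index is right---it is your Case~2---but the index must be taken large enough to kill the bias (so $k_\varepsilon$ rather than $1$); then no split on $m$ is needed and Case~1 becomes superfluous.
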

	
	\begin{proof}[Proof of Proposition \ref{prop3}]
		
		
		
		
		 Note that by definition $m_\delta\ge k_{\delta,\varepsilon}$. We first observe that since $\varepsilon$ fixed

		\begin{equation}\label{eq1}
			k_{\delta,\varepsilon}\nearrow \infty
		\end{equation}
		
		as $\delta\to0$. By definition of $k_{\delta,\varepsilon}$, \eqref{eq1} and $\|x^\dagger\|<\infty$ we deduce that for all $k>k_{\delta,\varepsilon}$

		\begin{align}\label{eq1a}
			\sqrt{\sum_{j=k}^\infty (y^\dagger,u_j)^2} &\le \sigma_{k_{\delta,\varepsilon}} \sqrt{\sum_{j=k}^\infty(x^\dagger,v_j)^2} \le \sqrt{k} \delta \frac{2\sqrt{\sum_{j=k}^\infty(x^\dagger,u_j)^2}}{\varepsilon}< \frac{\sqrt{k}}{2}\delta
		\end{align}
	
		for $\delta$ sufficiently small. Now we set
		
		$$k_\varepsilon:=\min\left\{k\in\N~:~ \sqrt{\sum_{j=k}^\infty (x^\dagger,v_j)^2}\le \frac{\varepsilon}{16} \right\}.$$ 
		
		Since $k_\varepsilon$ is independent from $\delta$ and because of \eqref{eq1} and \eqref{eq1a},  for $\delta$ sufficiently small it holds that
		
		\begin{equation}\label{eq2}
			\frac{1}{4\sqrt{2}\sigma_{k_{\delta,\varepsilon}+1}}>\frac{5}{2\sigma_{k_\varepsilon}} \quad \mbox{and}\quad \sum_{j=k}^\infty(y^\dagger,u_j)^2 \le k \delta^2,
		\end{equation}
		
		for all $k>k_{\delta,\varepsilon}$.  Keep in mind that $\varepsilon'=1/5<1/4$. Then,  for all $m/2\ge k> k_{\delta,\varepsilon}$ the reverse triangle inequality yields
		
		\begin{align*}
			\Psi_m(k,y^\delta)\omd &=\frac{1}{\sigma_k}\sqrt{\sum_{j=k}^m(y^\delta,u_j)^2}\omd\\
			  &\ge \frac{1}{\sigma_k}\left(\sqrt{\sum_{j=k}^m(y^\delta-y^\dagger,u_j)^2}-\sqrt{\sum_{j=k}^m(y^\dagger,u_j)^2}\right)\omd\\
			  &> \frac{1}{\sigma_k}\left((1-\varepsilon')\sqrt{m-k}\delta -\frac{\sqrt{k}}{2}\delta\right)\omd\ge \frac{\sqrt{m}\delta}{\sqrt{2}\sigma_k}\left(1-\varepsilon'-\frac{1}{2}\right)\omd\\
			  		&\ge \frac{1}{\sigma_{k_{\delta,\varepsilon}+1}}\frac{\sqrt{m}\delta}{4\sqrt{2}}\omd,
		\end{align*}
		
		while by the triangle inequality and definition of $k_\varepsilon$
		
		\begin{align*}
			\Psi_m(k_{\varepsilon},y^\delta)&\le \frac{1}{\sigma_{k_\varepsilon}}\sqrt{\sum_{j=k_\varepsilon}^m(y^\delta,u_j)^2}\omd\le \frac{(1+\varepsilon')\sqrt{m}\delta}{\sigma_{k_\varepsilon}}+\frac{\sqrt{\sum_{j=k_\varepsilon}^m(y^\dagger,u_j)^2}}{\sigma_{k_\varepsilon}}\\
			&\le \frac{1}{\sigma_{k_\varepsilon}} \frac{5}{4}\sqrt{m}\delta + \sqrt{\sum_{j=k_\varepsilon}^\infty(x^\dagger,v_j)^2}\\
				&< \frac{1}{\sigma_{k_\varepsilon}}\frac{5}{4}\sqrt{m}\delta + \frac{\varepsilon}{16}.
		\end{align*}
		
		Furthermore, by \eqref{eq2} on the one hand
		
		\begin{align*}
			 \frac{\sqrt{m}\delta}{\sigma_{k_{\delta,\varepsilon}+1}4\sqrt{2}}> \frac{5\sqrt{m}\delta}{2\sigma_{k_\varepsilon}},
		\end{align*}
		
		whereas by definition of $k_{\delta,\varepsilon}$ on the other hand
		
		\begin{align*}
			\frac{\sqrt{m} \delta}{4\sqrt{2}} \frac{1}{\sigma_{k_{\delta,\varepsilon}+1}} &= \frac{1}{4\sqrt{2}} \sqrt{\frac{m}{k_{\delta,\varepsilon}+1}} \frac{\sqrt{k_{\delta,\varepsilon}+1} \delta}{\sigma_{k_{\delta,\varepsilon}+1}}\\
			 &\ge \frac{1}{4\sqrt{2}}\sqrt{\frac{m}{k_{\delta,\varepsilon}+1}} \frac{\varepsilon}{2}\ge \frac{\varepsilon}{8}
		\end{align*}
		
		and thus putting the preceding two estimates together yields
		
		\begin{align*}
			\Psi_m(k,y^\delta)\omd &> \frac{1}{\sigma_{k_{\delta,\varepsilon}+1}}\frac{\sqrt{m}\delta}{4\sqrt{2}}\omd\\
			&\ge 2\max\left(\frac{1}{\sigma_{k_\varepsilon}} \frac{5\sqrt{m}\delta}{4},\frac{\varepsilon}{16}\right)\omd\\
			 &\ge \left(\frac{1}{\sigma_{k_\varepsilon}}\frac{5\sqrt{m}\delta}{4}+\frac{\varepsilon}{16}\right)\omd
			 \ge \Psi_m(k_\varepsilon,y^\delta)\omd.
		\end{align*}
		
		 We conclude $k_{\rm HD}^\delta(m)\omd \le \max(k_{\varepsilon},k_{\delta,\varepsilon})$ for all $m\ge 2k_{\delta,\varepsilon}$ and thus, for $\delta$ sufficiently small,
		
		    \begin{align*}
				\max_{m\in\N}\argmin_{k\le m/2}\Psi_m(k,y^\delta)\omd&=\max\left(\max_{m\le 2k_{\delta,\varepsilon}}\argmin_{k\le m/2}\Psi_m(k,y^\delta),\max(k_{\delta,\varepsilon},k_\varepsilon)\right)\\
				&\le\max\left(k_{\delta,\varepsilon},k_\varepsilon\right)\le k_{\delta,\varepsilon},
				\end{align*}
		
	since $k_{\varepsilon}\le k_{\delta,\varepsilon}$ for $\delta$ sufficiently small, which finishes the proof of the proposition.
		
	\end{proof}
	
	The preceding proposition guarantees that we do not stop too late. In order to obtain convergence it needs to be assured that one also stops sufficiently late.  Remember that $\varepsilon$ is the upper bound we want to achieve for the overall error $\|x_{k_{\rm HD}^\delta}^\delta-x^\dagger\|$ and set
	
	\begin{equation}\label{eq2a}
				k_\varepsilon:=\min\left\{k\in\N_0~:~\sqrt{\sum_{j=k+1}^\infty(x^\dagger,v_j)^2}\le \frac{\varepsilon}{2}\right\}.
				\end{equation}

	\begin{proposition}\label{prop4}

		It holds that 
		
		\begin{equation}
		k^\delta_{\rm HD}\omd \ge k_\varepsilon\omd
		\end{equation}
		
		for $\delta$ sufficiently small.

	\end{proposition}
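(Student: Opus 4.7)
The plan is to exhibit, for $\delta$ sufficiently small, a discretisation level $m$ (depending on $\delta$ and $\varepsilon$) such that on $\Omega_\delta$ the minimiser $k^\delta_{\rm HD}(m)=\argmin_{k\le m/2}\Psi_m(k,y^\delta)$ lies in $[k_\varepsilon,m/2]$. Since $k^\delta_{\rm HD}=\max_m k^\delta_{\rm HD}(m)$, this yields $k^\delta_{\rm HD}\omd \ge k_\varepsilon\omd$.

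The strategy mirrors Proposition \ref{prop3}: I will pick a comparison point $\tilde k\in[k_\varepsilon,m/2]$, upper bound $\Psi_m(\tilde k,y^\delta)$, derive a uniform lower bound on $\Psi_m(k,y^\delta)$ over $k<k_\varepsilon$, and verify that the lower bound exceeds the upper bound on $\Omega_\delta$. The upper bound follows from the triangle inequality together with Proposition \ref{prop1}: for $\tilde k\ge k_\varepsilon$,
\begin{equation*}
\Psi_m(\tilde k,y^\delta)\omd \le \sqrt{\sum_{j=\tilde k}^\infty(x^\dagger,v_j)^2} + \frac{(1+\varepsilon')\sqrt{m-\tilde k}\delta}{\sigma_{\tilde k}},
\end{equation*}
and by \eqref{eq2a} the signal part is controlled by $\varepsilon/2$ provided $\tilde k\ge k_\varepsilon+1$; otherwise a single extra term $(x^\dagger,v_{k_\varepsilon})^2$ enters.

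For the lower bound when $k<k_\varepsilon$, I use the reverse triangle inequality and the bounds on $\Omega_\delta$, yielding
\begin{equation*}
\Psi_m(k,y^\delta)\omd \ge \frac{1}{\sigma_k}\Bigl|\sqrt{\sum_{j=k}^m\sigma_j^2(x^\dagger,v_j)^2}-(1\pm\varepsilon')\sqrt{m-k}\delta\Bigr|.
\end{equation*}
Here I would split into a signal-dominated and a noise-dominated regime. In the signal-dominated case, minimality of $k_\varepsilon$ via \eqref{eq2a} provides $\sqrt{\sum_{j=k+1}^\infty(x^\dagger,v_j)^2}>\varepsilon/2$, which together with $\sigma_k\ge\sigma_{k_\varepsilon}$ for $k<k_\varepsilon$ gives a positive lower bound. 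In the noise-dominated case the bound $(1-\varepsilon')\sqrt{m-k}\delta/\sigma_k$ applies and again benefits from $\sigma_k\ge\sigma_{k_\varepsilon}$.

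The main obstacle will be calibrating $m=m(\delta)$ so that the lower bound beats the upper bound uniformly in $k<k_\varepsilon$, particularly because the signal term $\frac{1}{\sigma_k}\sqrt{\sum_{j=k}^m\sigma_j^2(x^\dagger,v_j)^2}$ can be attenuated by the factors $\sigma_j/\sigma_k\le 1$ when the signal mass of $x^\dagger$ sits predominantly at indices $\ge k_\varepsilon$, so that a naive bound of $\varepsilon/2$ on the signal alone is unavailable. Overcoming this requires picking $m$ in a moderate range, neither so small that the comparison is meaningless nor so large that the noise term at $\tilde k$ wipes out the gap, and possibly splitting the regime $k<k_\varepsilon$ into sub-ranges handled separately.
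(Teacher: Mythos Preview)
Your architecture is right, and you correctly isolate the key obstacle (the attenuation factors $\sigma_j/\sigma_k$). But the lower-bound argument you sketch does not close, and the remedy is simpler than sub-range splitting.

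The inequality $\sigma_k\ge\sigma_{k_\varepsilon}$ you invoke twice points the wrong way: it yields $1/\sigma_k\le 1/\sigma_{k_\varepsilon}$, an \emph{upper} bound on the prefactor, whereas you need a lower bound on $\Psi_m(k,y^\delta)$. Your noise-dominated branch is also problematic: for $k<k_\varepsilon$ the noise contribution $\sqrt{m-k}\,\delta/\sigma_k$ is \emph{smaller} than the noise contribution at any $\tilde k>k_\varepsilon$ (since $\sigma_{\tilde k}<\sigma_k$), so it cannot beat your upper bound at $\tilde k$. The paper's resolution runs in the opposite direction on the prefactor. For $k\le k_\varepsilon$ one has $\sigma_k\le\sigma_1$ and $\sum_{j=k}^m(y^\delta,u_j)^2\ge\sum_{j=k_\varepsilon}^m(y^\delta,u_j)^2$, which gives the \emph{$k$-independent} bound
\[
\Psi_m(k,y^\delta)\;\ge\;\frac{1}{\sigma_1}\sqrt{\sum_{j=k_\varepsilon}^m(y^\delta,u_j)^2},
\]
containing no attenuation factors at all. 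The correct benchmark is therefore not $\varepsilon/2$ but the fixed positive number $\nu:=\sigma_1^{-1}\sqrt{\sum_{j\ge k_\varepsilon}(y^\dagger,u_j)^2}$. One then takes the comparison index $\tilde k=k_{\varepsilon,\nu}>k_\varepsilon$ far enough that $\sqrt{\sum_{j\ge\tilde k}(x^\dagger,v_j)^2}\le\nu/8$, and $m=m_{\delta,\nu}$ so that $(1+\varepsilon')\sqrt{m}\,\delta/\sigma_{\tilde k}\le\nu/8$. On $\Omega_\delta$ the upper bound at $\tilde k$ is then at most $\nu/4$ while the uniform lower bound over $k\le k_\varepsilon$ is at least $3\nu/8$; no case distinction in $k$ is needed.
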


	\begin{proof}[Proof of Proposition \ref{prop4}]
		We set

		\begin{equation}\label{om2}
		\nu:=\frac{1}{\sigma_{1}}\sqrt{\sum_{j=k_\varepsilon}^\infty(y^\dagger,u_j)^2}.
		\end{equation}
		
		Clearly, $\nu>0$ if $\|x^\dagger\|>\varepsilon/2$. This we assume in the following, since $\|x^\dagger\|\le \varepsilon/2$ would imply $k_\varepsilon=0$, which would conclude the proof already. Further define

		$$k_{\varepsilon,\nu}:=\min\left\{k\ge k_\varepsilon+1~:~\sqrt{\sum_{j=k}^\infty(x^\dagger,v_j)^2}\le\frac{\nu}{8}\right\},$$
		
		 with $k_\varepsilon$ given in \eqref{eq2a} above. Then for
		
		\begin{equation}\label{om3}
		m_{\delta,\nu}:=\max\left\{ m\in\N~:~ \frac{\sqrt{m}\delta}{\sigma_{k_{\varepsilon,\nu}}}\le \frac{\nu}{8(1+\varepsilon')}\right\}
		\end{equation}
		
		and $\varepsilon$ fixed there clearly hold $m_{\delta,\nu} \nearrow \infty$ as $\delta\to0$. Let $\delta$ so small that $\sigma_1^{-1}\sqrt{\sum_{j=k_\varepsilon}^{m_{\delta,\nu}}(y^\dagger,u_j)^2}\ge \nu/2$ and $m_{\delta,\nu}\ge 2 k_\varepsilon$. Now, since $m_\delta\ge m_{\delta,\nu}$ and $\varepsilon'=1/5<1/4$,  for all $k\le k_\varepsilon$ we have
		
		\begin{align*}
			\Psi_{m_{\delta,\nu}}(k,y^\delta)\omd&= \frac{1}{\sigma_k}\sqrt{\sum_{j=k}^{m_{\delta,\nu}}(y^\delta,u_j)^2}\omd\ge \frac{1}{\sigma_1}\sqrt{\sum_{j=k_\varepsilon}^{m_{\delta,\nu}}(y^\delta,u_j)^2}\omd\\
			&\ge \frac{1}{\sigma_1}\sqrt{\sum_{j=k_1^\varepsilon}^{m_{\delta,\nu}}(y^\dagger,u_j)^2}\omd - \frac{1}{\sigma_1}\sqrt{\sum_{j=k_1^\varepsilon}^{m_{\delta,\nu}}(y^\delta-y^\dagger,u_j)^2}\omd\\
			&\ge \frac{\nu}{2}\omd - \frac{1+\varepsilon'}{\sigma_1}\sqrt{m_{\delta,\nu}}\delta\omd\ge \frac{3\nu}{8}\omd.
		\end{align*}

 On the contrary,
		
		\begin{align*}
			\Psi_{m_{\delta,\nu}}\left(k_{\varepsilon,\nu},y^\delta\right)\omd&\le \frac{1}{\sigma_{k_{\varepsilon,\nu}}}\sqrt{\sum_{j=k_{\varepsilon,\nu}}^{m_{\delta,\nu}}(y^\dagger,u_j)^2}\omd +\frac{1}{\sigma_{k_{\varepsilon,\nu}}}\sqrt{\sum_{j=1}^{m_{\delta,\nu}}(y^\dagger-y^\delta,u_j)^2}\omd\\
			&\le \sqrt{\sum_{j=k_{\varepsilon,\nu}}^\infty(x^\dagger,v_j)^2}\omd + (1+\varepsilon')\frac{\sqrt{m_{\delta,\nu}}}{\sigma_{k_{\varepsilon,\nu}}}\delta \omd\\
			&\le \frac{\nu}{8}\omd+\frac{\nu}{8} \omd\le \frac{\nu}{4}\omd
		\end{align*}
		
		The preceding two estimates directly imply $\Psi_{m_{\delta,\nu}}(k,y^\delta)\omd > \Psi_{m_{\delta,\nu}}(k_{\varepsilon,\nu},y^\delta)\omd$ for all $k\le k_\varepsilon$, thus since $k_\varepsilon< k_{\varepsilon,\nu}$ we deduce $k_{\rm HD}^\delta(m_{\delta,\nu})\omd > k_\varepsilon\omd$. Therefore in particular $k_{\rm HD}^\delta \omd >k_\varepsilon\omd$, and the proof is finished.
		
	\end{proof}

	We come to the main proof of Theorem \ref{th1} and apply Proposition \ref{prop3} and \ref{prop4} to the canonical error decomposition into a data propagation and an approximation error (known as the bias-variance decomposition) and obtain, for $\delta$ sufficiently small,
	
	\begin{align*}
		\|x^\delta_{k^\delta_{\rm HD}} - x^\dagger\|\omd&=\left\|\sum_{j=1}^{k^\delta_{\rm HD}}\frac{(y^\delta,u_j)}{\sigma_j} v_j - \sum_{j=1}^\infty(x^\dagger,v_j) v_j\right\|\omd\\
		&\le \sqrt{\sum_{j=1}^{k_{\rm HD}^\delta}\frac{(y^\delta-y^\dagger,u_j)^2}{\sigma_j^2}}\omd + \sqrt{\sum_{j=k^\delta_{\rm HD}+1}^\infty(x^\dagger,v_j)^2}\omd\\
		&\le \frac{(1+\varepsilon')\sqrt{k_{\delta,\varepsilon}}\delta}{\sigma_{k_{\delta,\varepsilon}}}+\sqrt{\sum_{j=k_\varepsilon+1}^\infty(x^\dagger,v_j)^2}\le \frac{\varepsilon}{2}+\frac{\varepsilon}{2}\le \varepsilon.
	\end{align*}
	
 Proposition \ref{prop1} guarantees that $\mathbb{P}(\Omega_\delta)\to1$ as $\delta\to0$ and thus finishes the proof of the theorem.

\subsection{Proof of Lemma \ref{ex1}}
	 Let $m_\delta:=\lceil\frac{-3\log(\delta)}{1-\log(2)}\rceil+1$ and define the event
	 
	 $$\Omega_\delta':=\left\{\omega\in\Omega~:~|(Z,u_{m_\delta-1})(\omega)|=\sqrt{2},~(Z,u_j)(\omega)=0~\forall m_\delta\le j \le 2m_\delta \right\}.$$
	
	By independency and the distribution of the white noise it holds that $\mathbb{P}\left(\Omega_\delta'\right)=(1/2)^{m_\delta+2}$. Since $y^\dagger=Kx^\dagger=0$ it holds that $(y^\delta,u_j)\omdp=0$ for $m_\delta\le j\le 2m_\delta$ and $|(y^\delta,u_{m_\delta-1})|\omdp=|(Z,u_{m_\delta-1})|\delta\omdp=\sqrt{2}\delta\omdp$. Thus for all $k\le m_\delta-1$ we have
	
	\begin{align*}
	\Psi_{2m_\delta}(k,y^\delta)\omdp =&\frac{1}{\sigma_k}\sqrt{\sum_{j=k}^{2m_\delta}(y^\delta,u_j)^2}\omdp\\
	&\ge \frac{1}{\sigma_{m_\delta-1}}|(y^\delta,u_{m_\delta-1})|\omdp=e^{-(m_\delta-1)} \sqrt{2}\omdp,
	\end{align*}

	while 
	
	$$\Psi_{2m_\delta}(m_\delta,y^\delta)\omdp= \frac{1}{\sigma_{m_\delta}}\sqrt{\sum_{j=m_\delta}^{2m_\delta}(y^\delta,u_j)^2}\omdp=0.$$
	
	Therefore $k^\delta_{\rm HD}\omdp \ge k^\delta_{\rm HD}(2m_\delta)\omdp \ge m_\delta \omdp$. Furthermore, for $k\ge m_\delta$,
	
	$$\|x_k^\delta-x^\dagger\|^2\omdp=\sum_{j=1}^k\frac{(y^\delta-y^\dagger,u_j)^2}{\sigma_j^2}\omdp \ge \frac{\delta^2(Z,u_{m_\delta-1})^2}{\sigma_{m_\delta-1}^2}\omdp =2 \delta^2e^{(m_\delta-1)}\omdp.$$
	
	Finally, restricting the expectation to the event $\Omega_\delta'$ yields
	
	\begin{align*}
	\E\|x^\delta_{k^\delta_{\rm HD}}-x^\dagger\|^2 &\ge \E\left[\|x^\delta_{k^\delta_{\rm HD}}-x^\dagger\|^2 \omdp\right] \ge 2\delta^2e^{(m_\delta-1)}\mathbb{P}\left(\Omega_\delta'\right)\\
	 &= 2\delta^2e^{(m_\delta-1)}\left(1/2\right)^{m_\delta+2}
	 = \frac{\delta^2}{4} e^{(m_\delta-1)(1-\log(2))} = \frac{1}{4\delta}
	\end{align*}
	 and the proof is finished.

\subsection{Proof of Theorem \ref{th2}}
	
	We will need the following technical proposition similar to Proposition \ref{prop1} above.
	
	\begin{proposition}\label{prop5}
		Let $(X_j)_{j\in\N}$ be i.i.d. standard Gaussian. Then, for $\varepsilon>0$ it holds that
		
		$$\mathbb{P}\left(\sup_{m\ge M}\left|\frac{1}{m}\sum_{j=1}^m (X_j^2-1)\right|\ge \varepsilon\right)\le 2e^{-\frac{M}{2}\left(\varepsilon-\log(1+\varepsilon)\right)}.$$
		
	\end{proposition}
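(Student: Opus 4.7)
The plan is the standard Cram\'{e}r--Chernoff method with an exponential martingale, combined with Doob's maximal inequality. Set $V_n := \sum_{j=1}^n(X_j^2-1)$, so that the quantity in question is $V_m/m$. Since $X_1^2\sim \chi_1^2$ has moment generating function $\E[e^{tX_1^2}]=(1-2t)^{-1/2}$ for $t<1/2$, the log-m.g.f.\ of $X_1^2-1$ is
\begin{equation*}
\phi(t) := -t-\tfrac{1}{2}\log(1-2t),\qquad t\in(0,\tfrac{1}{2}),
\end{equation*}
and the process $M_n(t) := \exp(tV_n - n\phi(t))$ is, for each fixed $t$, a nonnegative product martingale with $\E[M_n(t)]=1$.

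To treat the upper tail $\mathbb{P}(\sup_{m\ge M}V_m/m\ge \varepsilon)$, the key deterministic reduction is: for any $t>0$ with $c(t):=t\varepsilon-\phi(t)>0$ and any $m\ge M$,
\begin{equation*}
V_m\ge \varepsilon m \quad\Longrightarrow\quad tV_m-m\phi(t)\ge m\,c(t)\ge M\,c(t),
\end{equation*}
that is, $M_m(t)\ge e^{Mc(t)}$. Applying Doob's maximal inequality to the nonnegative martingale $M_n(t)$ (and letting the time horizon tend to infinity) yields $\mathbb{P}(\sup_m M_m(t)\ge e^{Mc(t)})\le e^{-Mc(t)}$. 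Optimising in $t$, the calculation $c'(t)=0 \Leftrightarrow t=\varepsilon/(2(1+\varepsilon))$ gives $c(t)=\tfrac{1}{2}(\varepsilon-\log(1+\varepsilon))$, producing exactly the exponent asked for.

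For the lower tail the same recipe applies with the conjugate log-m.g.f.\ $\psi(t):=t-\tfrac{1}{2}\log(1+2t)$ (valid for all $t>0$) and the martingale $\widetilde M_n(t):=\exp(-tV_n - n\psi(t))$. The analogous optimisation yields
\begin{equation*}
\mathbb{P}\left(\sup_{m\ge M}\frac{-V_m}{m}\ge \varepsilon\right)\le \exp\left(-\tfrac{M}{2}\bigl(-\varepsilon-\log(1-\varepsilon)\bigr)\right)
\end{equation*}
for $\varepsilon\in(0,1)$, while for $\varepsilon\ge 1$ the event is empty since $V_m/m\ge -1$ deterministically. A short calculation (the difference vanishes at $\varepsilon=0$ with nonnegative derivative $2\varepsilon^2/(1-\varepsilon^2)$) shows $-\varepsilon-\log(1-\varepsilon)\ge \varepsilon-\log(1+\varepsilon)$ on $(0,1)$, so the lower-tail rate dominates the upper one and a union bound supplies the factor $2$.

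I do not anticipate a genuine obstacle; the only mildly delicate point is ensuring that the pointwise Chernoff bound transfers uniformly in $m\ge M$, which is precisely the content of the inequality $m\,c(t)\ge M\,c(t)$ and explains why one requires $c(t)>0$ (i.e.\ a strict Chernoff gap) before invoking Doob. Once this reduction is in place, optimising the exponent is routine one-variable calculus and produces the stated bound.
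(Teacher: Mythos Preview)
Your argument is correct. The optimisation in $t$ and the comparison of the two tail rates are done carefully, and the inclusion $\{V_m/m\ge\varepsilon\}\subset\{M_m(t)\ge e^{Mc(t)}\}$ for $m\ge M$ (valid precisely because $c(t)>0$) is the right mechanism to pass from a forward martingale bound to a uniform-in-$m\ge M$ estimate.

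The paper proceeds along a somewhat different line. Instead of the forward exponential martingale $M_n(t)=\exp(tV_n-n\phi(t))$, it works with the processes $\bigl(\exp(\pm\lambda V_m/m)\bigr)_{m\in\N}$, which are \emph{reverse} submartingales (since the sample averages $V_m/m$ form a reverse martingale and $\exp$ is convex). Doob's extremal inequality for reverse submartingales then bounds $\sup_{m\ge M}$ directly by the expectation at time $M$, namely $\E\exp(\pm\lambda V_M/M)$, which is evaluated via the $\chi^2$ m.g.f.\ and optimised in $\lambda$ with the same choice $\lambda=\tfrac{M}{2}\tfrac{\varepsilon}{1+\varepsilon}$. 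The paper also treats both tails with the \emph{same} $\lambda$ and then asserts that the upper-tail term dominates the lower-tail term, whereas you optimise each tail separately and prove the rate comparison $-\varepsilon-\log(1-\varepsilon)\ge \varepsilon-\log(1+\varepsilon)$ explicitly. Your route avoids invoking reverse martingales and makes the tail comparison transparent; the paper's route is slightly shorter once one accepts the reverse-submartingale structure, since the supremum over $m\ge M$ is handled in one step without the auxiliary reduction $mc(t)\ge Mc(t)$. Either way the exponent is identical.
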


\begin{proof}[Proof of Proposition \ref{prop5}]
	We apply Doob's extremal inequality to the reverse positive submartingales $\left(\exp\lambda\frac{1}{m}\sum_{j=1}^m(X_j^2-1)\right)_{m\in\N}$ and $\left(\exp\lambda\frac{-1}{m}\sum_{j=1}^m(X_j^2-1)\right)_{m\in\N}$ and obtain, for  $\lambda>0$ to be optimised 
	
	\begin{align*}
		&\mathbb{P}\left(\sup_{m\ge M}\frac{1}{m}\left|\sum_{j=1}^m(X_j^2-1)\right|>\varepsilon\right)\\
		\le &\mathbb{P}\left(\sup_{m\ge M}\frac{1}{m}\sum_{j=1}^m(X_j^2-1)>\varepsilon\right)+\mathbb{P}\left(\sup_{m\ge M}\frac{-1}{m}\sum_{j=1}^m(X_j^2-1)>\varepsilon\right)\\
		&= \mathbb{P}\left(\sup_{m\ge M}\exp\left(\lambda\frac{1}{m}\sum_{j=1}^m(X_j^2-1)\right)>\exp(\lambda\varepsilon)\right)\\
		&\qquad+\mathbb{P}\left(\sup_{m\ge M}\exp\left(-\lambda\frac{1}{m}\sum_{j=1}^m(X_j^2-1)\right)>\exp(\lambda\varepsilon)\right)\\
		 \le &\frac{\E\left[e^{\frac{\lambda}{M}\sum_{j=1}^M \left(X_j^2-1\right)}\right] +\E\left[e^{\frac{-\lambda}{M}\sum_{j=1}^M \left(X_j^2-1\right)}\right]}{e^{\lambda\varepsilon}}\\
		 &\le e^{-\lambda\varepsilon}\left( \E\left[e^{\frac{\lambda}{M} \sum_{j=1}^M X_j^2}\right] e^{-\lambda} + \E\left[e^{-\frac{\lambda}{M}\sum_{j=1}^M X_j^2}\right]e^\lambda\right)\\
		\le &e^{-\lambda\varepsilon}\left(\frac{e^{-\lambda}}{\left(1-\frac{2\lambda}{M}\right)^\frac{M}{2}}  + \frac{e^\lambda}{\left(1+\frac{2\lambda}{M}\right)^\frac{M}{2}}\right)
		\end{align*}
	
	where we used that $\sum_{j=1}^M X_j^2$ is $\chi^2$-distributed in the fifth step.  We plug in the choice 
	$\lambda_{M,\varepsilon}=\frac{M}{2}\frac{\varepsilon}{1+\varepsilon}$  and obtain

\begin{align*}
&\mathbb{P}\left(\sup_{m\ge M}\frac{1}{m}\left|\sum_{j=1}^m(X_j^2-1)\right|>\varepsilon\right)\le  e^{-\lambda_{M,\varepsilon}\varepsilon}\left(\frac{e^{-\lambda_{M,\varepsilon}}}{\left(1-\frac{2\lambda_{M,\varepsilon}}{M}\right)^\frac{M}{2}}  + \frac{e^\lambda_{M,\varepsilon}}{\left(1+\frac{2\lambda_{M,\varepsilon}}{M}\right)^\frac{M}{2}}\right)\\
&\le 2 e^{-(1+\varepsilon)\lambda_{M,\varepsilon}- \frac{M}{2}\log\left(1-\frac{2\lambda_{M,\varepsilon}}{M}\right)} \le 2 e^{-2(\varepsilon-\log(1+\varepsilon))},
\end{align*}

	 where we used that the first term dominates the second in the second step.
	
	\end{proof}
	
	We start with the proof of the Theorem and fix an auxiliary parameter $\varepsilon<1/3$. Moreover, we assume that the $(X_j)_{j\in\N}$ are i.i.d. standard Gaussian. The case where all $X_j$ equal one is easier and the small differences will be explained at the end of the proof. We first derive a lower bound for the minimal error. Define $k_\delta:=\lceil\delta^\frac{-2}{q+\eta}\rceil$; which is the index approximately balancing the bias and the variance of $\|x_k^\delta-x^\dagger\|$, as can be seen with
	
	$$\E\|x_k^\delta-x^\dagger\|^2=\sum_{j=1}^k\frac{\E\left[(y^\delta-y^\dagger,u_j)^2\right]}{\sigma_j^2} + \sum_{j=k+1}^\infty \E\left[(x^\dagger,v_j)^2\right] \asymp k^{1+q}\delta^2 + k^{1-\eta}.$$
	
	 We show that $k_\delta$ yields an error similar to the optimal  oracle choice (clearly not determinable by practical experiment) given by the $\arg\min$ of $\|x_k^\delta-x^\dagger\|$. Later we need an additional auxiliary quantity defined as $\kappa_\delta = \lceil k_\delta^\frac{\eta-1}{q+\eta-1}\rceil-1$. Obviously we have that $\kappa_\delta\le k_\delta$. Similar to the proof of Theorem \ref{th1}, we perform the analysis on the following events
	
	\begin{align}\label{th2:om}
		\Omega_\delta:&=\left\{\omega\in\Omega~:~\frac{1}{m}\left|\sum_{j=1}^m(X_j^2(\omega)-1)\right|\le \varepsilon,~ \forall m\ge c_{q,\eta,\varepsilon'}\kappa_\delta\right\}\\\notag
		&\qquad\cap \left\{\omega\in\Omega~:~\frac{1}{m}\left|\sum_{j=1}^m(y^\delta-y^\dagger,u_j)^2(\omega)-\delta^2\right|\le \varepsilon \delta^2,~\forall m\ge k_\delta\right\},
 \end{align}
	 
where { the constant $c_{q,\eta,\varepsilon'}<1$ is given in \eqref{eq3ab}  below. On $\Omega_\delta$ we have control over the randomness. Specifically, with $\varepsilon':=1-\sqrt{1-3\varepsilon}$ it holds that

\begin{align}\label{th2:om1}
&(1-\varepsilon')\sqrt{m-k}\omd\le\sqrt{\sum_{j=k+1}^{m}X_j^2}\omd\le(1+\varepsilon')\sqrt{m-k},\\\label{th2:om2}
&(1-\varepsilon')\sqrt{m-k}\delta\omd\le\sqrt{\sum_{j=k+1}^m(y^\delta-y^\dagger,u_j)^2}\omd\le(1+\varepsilon')\sqrt{m-k}\delta,\\\label{th2:om3}
&(1-\varepsilon')\sqrt{m}\delta\omd\le\sqrt{\sum_{j=1}^m(y^\delta-y^\dagger,u_j)^2}\omd\le(1+\varepsilon')\sqrt{m}\delta,
\end{align}

for all $m,k\ge \kappa_\delta$ and $m\ge 2k$. We only show exemplary the first of the above inequalities. { Note that for $\varepsilon<1/3$ and $m\ge 2k$ it holds that $(1+\varepsilon)m-(1-\varepsilon)k \le (1+3\varepsilon)(m-k)$ and $(1-\varepsilon)m-(1+\varepsilon)k\ge (1-3\varepsilon)(m-k)$}. Using this, by definition of $\Omega_\delta$ one obtains

\begin{align*}
\sqrt{\sum_{j=k+1}^{m}X_j^2\omd} &= \sqrt{\sum_{j=1}^{m}X_j^2 - \sum_{j=1}^{k}X_j^2}\omd\le \sqrt{(1+\varepsilon)m -(1-\varepsilon)k}\\
&\le\sqrt{1+3\varepsilon}\sqrt{m-k}
\end{align*}
and
\begin{align*}
\sqrt{\sum_{j=k+1}^{m}X_j^2}\omd &= \sqrt{\sum_{j=1}^{m}X_j^2 - \sum_{j=1}^{k}X_j^2}\omd\ge \sqrt{(1-\varepsilon)m -(1+\varepsilon)k}\omd\\
&\ge\sqrt{1-3\varepsilon}\sqrt{m-k}\omd,
\end{align*}

and the claim follows.

	  We start to  prove that

\begin{equation}\label{th2:e1}
 \min_{k\in\N}\|x_k^\delta-x^\dagger\|^2\omd \ge (1-\varepsilon')^2\min\left(\frac{6^{1-\eta}}{1-\eta},2^{1+q}\right)  \delta^{2\frac{\eta-1}{\eta+q}}\omd.
 \end{equation}

First, by monotonicity and positivity, for $k\le 2k_\delta$ we have

\begin{equation*}
 	\|x_k^\delta-x^\dagger\|^2 = \sum_{j=1}^k\frac{(y^\delta-y^\dagger,u_j)^2}{\sigma_j^2} + \sum_{j=k+1}^\infty(x^\dagger,v_j)^2\ge \sum_{j=2k_\delta+1}^\infty(x^\dagger,v_j)^2.
\end{equation*}

Further, using \eqref{th2:om1} we obtain

\begin{align*}
	\|x_k^\delta-x^\dagger\|^2\omd &\ge\sum_{j=2k_\delta+1}^\infty(x^\dagger,v_j)^2\omd = \sum_{j=2k_\delta+1}^\infty j^{-\eta} X_j^2\omd\\
	&= \sum_{l=2}^\infty \sum_{s=1}^{k_\delta} (lk_\delta+s)^{-\eta} X_{lk_\delta+s}^2\omd\ge \sum_{l=2}^\infty ((l+1)k_\delta)^{-\eta}\sum_{s=1}^{k_\delta} X_{lk_\delta+s}^2\omd\\
	&\ge \sum_{l=2}^\infty \left((l+1)k_\delta\right)^{-\eta} (1-\varepsilon')^2((lk_\delta+k_\delta)-(lk_\delta))\omd\\
	&= k_\delta^{1-\eta} (1-\varepsilon')^2\sum_{l=3}^\infty l^{-\eta}\omd\ge k_\delta^{1-\eta}(1-\varepsilon')^2 \int_3^{\infty} x^{-\eta}{\rm d}x\omd\\
	&= k_\delta^{1-\eta}\frac{3^{1-\eta}(1-\varepsilon')^2}{1-\eta}\omd.
	\end{align*}

Similar, for $k\ge 2k_\delta$ we have

\begin{equation*}
\|x_k^\delta-x^\dagger\|^2 = \sum_{j=1}^k\frac{(y^\delta-y^\dagger,u_j)^2}{\sigma_j^2} + \sum_{j=k+1}^\infty(x^\dagger,v_j)^2\ge \sum_{j=1}^{2k_\delta}\frac{(y^\delta-y^\dagger,u_j)^2}{\sigma_j^2}
\end{equation*}

and using \eqref{th2:om3} obtain

\begin{align*}
	\|x_k^\delta-x^\dagger\|^2\omd &\ge \sum_{j=1}^{2k_\delta}\frac{(y^\delta-y^\dagger,u_j)^2}{\sigma_j^2}\omd\ge (2k_\delta)^{q} \sum_{j=1}^{2k_\delta}(y^\delta-y^\dagger,u_j)^2\omd\\
	&\ge (2k_\delta)^q(1-\varepsilon')^2 2k_\delta\omd
	=(1-\varepsilon')^2 2^{1+q} k_\delta^{1+q}\delta^2\omd.
\end{align*}

By the choice of $k_\delta$ it holds that $k_\delta^{1-\eta} \ge 2^{1-\eta}  \delta^{2\frac{\eta-1}{\eta+q}}$ and   $k_\delta^{1+q}\delta^2\ge \delta^{2\frac{\eta-1}{\eta+q}}$. Plugging this into the above separate estimates for $k\le 2k_\delta$ and $k\ge 2k_\delta$ finally yields \eqref{th2:e1}

\begin{align*}
 \min_{k\in\N}\|x_k^\delta-x^\dagger\|^2\omd &\ge \min\left(k_\delta^{1-\eta}\frac{3^{1-\eta}(1-\varepsilon')^2}{1-\eta},(1-\varepsilon')^2 2^{1+q} k_\delta^{1+q}\delta^2\right)\omd\\
 &\ge (1-\varepsilon')^2\min\left(\frac{6^{1-\eta}}{1-\eta},2^{1+q}\right) \delta^{2\frac{\eta-1}{\eta+q}}\omd.	
\end{align*}

We move on and first show that the modified heuristic discrepancy principle does  not stop too late. Indeed, for all $m, k\ge C_{q,\eta,\varepsilon'}k_\delta$ with 
$$C_{q,\eta,\varepsilon'}:=\max\left(\left(\frac{2(1+\varepsilon')}{1-\varepsilon'}\right)^\frac{2}{q+\eta},\left(\frac{4\sqrt{3}(1+\varepsilon')}{1-\varepsilon'}\right)^\frac{2}{q}\right)$$

  and $m\ge 2k$ we have

		\begin{align*}
	\Psi_m(k,y^\delta)\omd &=\frac{1}{\sigma_k}\sqrt{\sum_{j=k}^m(y^\delta,u_j)^2}\omd\\
	&\ge \frac{1}{\sigma_k}\left(\sqrt{\sum_{j=k}^m(y^\delta-y^\dagger,u_j)^2}-\sqrt{\sum_{j=k}^m(y^\dagger,u_j)^2}\right)\omd\\
	&> \frac{1}{\sigma_k}\left((1-\varepsilon')\sqrt{m-k-1}\delta - k^\frac{-(q+\eta)}{2} \sqrt{\sum_{j=k}^m X_j^2}\right)\omd\\
	&> \frac{1}{\sigma_k}\left((1-\varepsilon')\sqrt{m-k-1}\delta - k^{-\frac{q+\eta}{2}}(1+\varepsilon')\sqrt{m-k-1}\right)\omd\\
		&\ge \frac{\sqrt{m-k-1}\delta}{\sigma_k}\left((1-\varepsilon') - C_{q,\eta,\varepsilon'}^\frac{-(q+\eta)}{2}(1+\varepsilon')k_\delta^{-\frac{q+\eta}{2}}\delta^{-1}\right)\omd\\
	&=\frac{\sqrt{m-k-1}\delta}{\sigma_k}\left(1-\varepsilon'-C_{q,\eta,\varepsilon'}^\frac{-(q+\eta)}{2}(1+\varepsilon')\right)\omd\\
	&> \frac{C_{q,\eta,\varepsilon'}^\frac{q}{2}}{\sqrt{3}}\left(1-\varepsilon'-C_{q,\eta,\varepsilon'}^\frac{-(q+\eta)}{2}(1+\varepsilon')\right)\sqrt{m k_\delta^q}\delta\omd\\
	&\ge \frac{C_{q,\eta,\varepsilon'}^\frac{q}{2}}{\sqrt{3}} \frac{1-\varepsilon'}{2}\sqrt{m k_\delta^q} \omd \ge 2(1+\varepsilon')\sqrt{m k_\delta^q} \omd,
\end{align*}

where we used that $C_{q,\eta,\varepsilon'}^\frac{-(q+\eta)}{2}(1+\varepsilon')\le \frac{1-\varepsilon'}{2}$ (by the first argument in the definition of $C_{q,\eta,\varepsilon'}$) in the second last and plugged in (the second argument of) $C_{q,\eta,\varepsilon'}$ in the last step.
On the other hand,

\begin{align*}
	&\Psi_m(k_\delta,y^\delta)\omd\\
	= &\frac{1}{\sigma_{k_\delta}}\sqrt{\sum_{j=k_\delta}^m(y^\delta,u_j)^2}\omd\\
	\le &k_\delta^\frac{q}{2}\left(\sqrt{\sum_{j=k_\delta}^m(y^\delta-y^\dagger,u_j)^2} + k_\delta^{-\frac{q+\eta}{2}} \sqrt{\sum_{j=k_\delta}^m X_j^2}\right)\omd\\
	< &k_\delta^\frac{q}{2} \left((1+\varepsilon')\sqrt{m-k_\delta-1}\delta+(1+\varepsilon')k_\delta^\frac{-(q+\eta)}{2} \sqrt{m-k_\delta-1}\right)\omd\\
	\le &2(1+\varepsilon')\sqrt{k_\delta^q m}\delta\omd< \Psi_m(k,y^\delta),
\end{align*}

where we used the preceding estimate in the last step.

}
 Therefore, 

\begin{equation}\label{eq3}
k^\delta_{\rm HD} \omd \le C_{q,\eta,\varepsilon'} k_\delta.
\end{equation}

 Next we show that we stop sufficiently late. First,

	\begin{align*}
	\Psi_{2k_\delta}(k_\delta,y^\delta)\omd&\le \frac{1}{\sigma_{k_\delta}}\left(\sqrt{\sum_{j=k_\delta}^{2k_\delta}(y^\dagger,u_j)^2} + \sqrt{\sum_{j=k_\delta}^{2k_\delta} (y^\delta-y^\dagger,u_j)^2}\right)\omd\\
	&\le \sqrt{k_\delta^q}\left((1+\varepsilon') \sqrt{k_\delta^{-(q+\eta)} (k_\delta+1)}  +(1+\varepsilon')\sqrt{k_\delta+1}\delta\right)\\
	  &\le 2\sqrt{2}(1+\varepsilon')\sqrt{k_\delta^{1+q}} \delta.
	\end{align*}

	Then, for $k\le c_{q,\eta,\varepsilon'}\kappa_\delta$ with 
	
	\begin{equation}\label{eq3ab}
	c_{q,\eta,\varepsilon'}:=\left(2^\frac{q+\eta}{2}\frac{3\sqrt{2}(1+\varepsilon')}{1-\varepsilon'}\right)^\frac{2}{1-q-\eta}<1
	\end{equation}
	
	 (note that $\eta>1$)

	\begin{align*}
		&\Psi_{2k_\delta}(k,y^\delta)\omd\ge \frac{1}{\sigma_k}\left(\sqrt{\sum_{j=k}^{2k_\delta}(y^\dagger,u_j)^2}-\sqrt{\sum_{j=1}^{2k_\delta}(y^\delta-y^\dagger,u_j)^2}\right)\omd\\
		\ge &\frac{1}{\sigma_k}\left( \sqrt{\left(2c_{q,\eta,\varepsilon'}\kappa_\delta\right)^{-(q+\eta)}\sum_{j=\lfloor c_{q,\eta,\varepsilon'}\kappa_\delta\rfloor+1}^{2\lfloor c_{q,\eta,\varepsilon'}\kappa_\delta\rfloor}X_j^2} - \sqrt{\sum_{j=1}^{2k_\delta}(y^\delta-y^\dagger,u_j)^2}\right)\omd\\
		&> \frac{1}{\sigma_k}\left(\left(\lfloor2c_{q,\eta,\varepsilon'}\kappa_\delta\rfloor\right)^\frac{-q-\eta}{2}(1-\varepsilon') \sqrt{\lfloor c_{q,\eta,\varepsilon'}\kappa_\delta\rfloor} - (1+\varepsilon') \sqrt{2k_\delta}\delta\right)\omd\\
		&\ge(1-\varepsilon')2^{-\frac{q+\eta}{2}}c_{q,\eta,\varepsilon'}^\frac{1-q-\eta}{2}(\kappa_\delta-1)^\frac{1-q-\eta}{2} - (1+\varepsilon') \sqrt{2k_\delta}\delta\omd\\
		&\ge \left(c_{q,\eta,\varepsilon'}^\frac{1-q-\eta}{2} 2^\frac{-q-\eta}{2}(1-\varepsilon') - (1+\varepsilon')\sqrt{2}\right) \sqrt{k_\delta^{1+q}}\delta\omd\\
		&= 2\sqrt{2}(1+\varepsilon')\sqrt{k_\delta^{1+q}}\delta\omd
	\end{align*}
	
	by definition of $\kappa_\delta$ and choice of $c_{q,\eta,\varepsilon'}$. On the other hand, for $c_{q,\eta,\varepsilon'} \kappa_\delta\le k\le c_{q,\eta,\varepsilon'} k_\delta$ we also have

	\begin{align*}
		\Psi_{2k_\delta}(k,y^\delta)\omd&\ge \frac{1}{\sigma_k}\left(\sqrt{\sum_{j=k}^{2k-1}(y^\dagger,u_j)^2}-\sqrt{\sum_{j=1}^{2k_\delta}(y^\delta-y^\dagger,u_j)^2}\right)\omd\\
		&> \frac{1}{\sigma_k}\left(\left(2k\right)^\frac{-q-\eta}{2} (1-\varepsilon')\sqrt{k}- (1+\varepsilon') \sqrt{2k_\delta}\delta\right)\omd\\
				&\ge 2^\frac{-q-\eta}{2} (1-\varepsilon')k^\frac{1-q-\eta}{2}- (1+\varepsilon') \sqrt{2k_\delta}\delta k^\frac{q}{2}\omd\\
		&\ge \left(c_{q,\eta,\varepsilon'}^\frac{1-q-\eta}{2} 2^{-q-\eta}(1-\varepsilon') - (1+\varepsilon')\sqrt{2}\right) \sqrt{k_\delta^{1+q}}\delta\\
		&\ge 2\sqrt{2}(1+\varepsilon')\sqrt{k_\delta^{1+q}}\delta\omd.
	\end{align*}
	
	 Putting the last three estimates together gives $\Psi_{2k_\delta}(k,y^\delta)\omd> \Psi_{2k_\delta}(k_\delta,y^\delta)\omd$ for all $k\le c_{q,\eta,\varepsilon'}k_\delta$, which yields 
	 
	 \begin{equation}\label{eq4}
	 k^\delta_{\rm HD}\omd  > c_{q,\eta,\varepsilon'} k_\delta\omd.
	 \end{equation}
	  Thus, using \eqref{eq3} and \eqref{eq4} in the data propagation and the approximation error respectively and \eqref{th2:e1} for the minimal error, it holds that
	 
	 \begin{align*}
	  &\|x_{k_{\rm HD}^\delta}^\delta-x^\dagger\|\omd\\
	  \le &\sqrt{\sum_{j=1}^{k_{\rm HD}^\delta}\frac{(y^\delta-y^\dagger,u_j)^2}{\sigma_j^2}}\omd + \sqrt{\sum_{j=k^\delta_{\rm HD}+1}^\infty(x^\dagger,v_j)^2}\omd\\
	  \le &(1+\varepsilon') \left(C_{q,\eta,\varepsilon'}k_\delta\right)^\frac{1+q}{2}\delta\omd  + \sqrt{\sum_{j=\lfloor c_{q,\eta,\varepsilon'}k_\delta\rfloor+1}^\infty j^{-\eta} X_j^2}\omd	\\
	  \le &(1+\varepsilon')\left(C_{q,\eta,\varepsilon'}k_\delta\right)^\frac{1+q}{2}\delta\omd + \sqrt{\sum_{l=1}^\infty (lc_{q,\eta,\varepsilon'}k_\delta)^{-\eta}\sum_{j=l\lfloor c_{q,\eta,\varepsilon'}k_\delta\rfloor+1}^{(l+1)\lfloor c_{q,\eta,\varepsilon'}k_\delta\rfloor} X_j^2}\omd\\
	  \le &(1+\varepsilon')  \left(C_{q,\eta,\varepsilon'}k_\delta\right)^\frac{1+q}{2}\delta\omd + \sqrt{k_\delta^{1-\eta} c_{q,\eta,\varepsilon'}^{-\eta}(1+\varepsilon')^2 \sum_{l=1}^\infty l^{-\eta}}\omd\\
	  \le &(1+\varepsilon')\left(\sqrt{C_{q,\eta,\varepsilon'^{1+q}}}\omd + c_{q,\eta,\varepsilon'}^\frac{-\eta}{2} \sqrt{\frac{\eta}{\eta-1}}\right) \delta^\frac{\eta-1}{\eta+q}\omd\\
	  \le  &(1+\varepsilon')\left(\sqrt{C_{q,\eta,\varepsilon'}^{1+q}} + c_{q,\eta,\varepsilon'}^\frac{-\eta}{2} \sqrt{\frac{\eta}{\eta-1}}\right)\sqrt{\max\left(1,(\eta-1)4^{\eta-1}\right)}  \min_{k\in\N}\|x_k^\delta-x^\dagger\|\\
	 = &C_{q,\eta,\varepsilon} \min_{k\in\N}\|x_k^\delta-x^\dagger\|
	 \end{align*}
 	
 	with 
 	
 	\begin{equation}\label{bound0}
 	C_{q,\eta,\varepsilon}:=(1+\varepsilon')\left(\sqrt{C_{q,\eta,\varepsilon'}^{1+q}} + c_{q,\eta,\varepsilon'}^\frac{-\eta}{2} \sqrt{\frac{\eta}{\eta-1}}\right)\sqrt{\max\left(1,(\eta-1)4^{\eta-1}\right)}.
 	\end{equation}
 	
 	 Finally, by Proposition \ref{prop5} the probability can be bounded as follows

 	\begin{align*}
 		\mathbb{P}\left(\Omega_\delta\right)&\ge 1 - \mathbb{P}\left(\sup_{m\ge c_{q,\eta,\varepsilon'}\kappa_\delta}\frac{1}{m}\left|\sum_{j=1}^m \left(X_j^2-1\right)\right|>\varepsilon\right)\\
 		&\qquad\qquad-\mathbb{P}\left(\sup_{m\ge k_\delta}\frac{1}{m}\left|\sum_{j=1}^m\left((y^\delta-y^\dagger,u_j)^2-\delta^2\right)\right|> \varepsilon \delta^2\right) \\
 		&\ge 1-4 e^{-\frac{c_{q,\eta,\varepsilon'}\kappa_\delta}{2}\left(\varepsilon-\log(1+\varepsilon)\right)}= 1-4 e^{-\frac{c_{q,\eta,\varepsilon'}}{2}(\varepsilon-\log(1+\varepsilon)) \delta^{\frac{2(1-\eta)}{(q+\eta-1)(q+\eta)}}}\\
 		&=1-4 e^{-p_{q,\eta,\varepsilon} \delta^\frac{2(1-\eta)}{(q+\eta-1)(q+\eta)}},
 		\end{align*}

	with 
	
	\begin{equation}\label{prob0}
	p_{q,\eta,\varepsilon}:=\frac{c_{q,\eta,\varepsilon'}\left(\varepsilon-\log(1+\varepsilon)\right)}{2}.
	\end{equation}

{	Finally, we set $\varepsilon=1/4$ in \eqref{bound0} and \eqref{prob0} and set
	
	\begin{align}\label{bound}
	  C_{q,\eta}:&=C_{q,\eta,1/4},\\\label{prob}
	  p_{q,\eta}:&=p_{q,\eta,1/4}.
	\end{align}

 Note that different choices for $\varepsilon<1/3$ could be made, where one has to keep in mind that a smaller $\varepsilon$ yields smaller $p_{q,\eta,\varepsilon}$ and $C_{q,\eta,\varepsilon}$ in \eqref{eqth2} and hence a better bound for $\|x^\delta_{k^\delta_{\rm HD}}-x^\dagger\|$, but which holds with a smaller probability. 

If $X_j=1$ the proof simplifies at some places and we only give a sketch. First, then  $\sum_{j=k+1}^m X_j^2 = m-k$ for any $k,m\in\N$ with probability $1$, and thus \eqref{th2:om1} still clearly holds. Infact, in this case it is possible to get a better bound for $\mathbb{P}(\Omega_\delta)$, since now

\begin{align*}
\mathbb{P}\left(\Omega_\delta\right)&\ge 1-0- \mathbb{P}\left(\sup_{m\ge k_\delta}\frac{1}{m}\left|\sum_{j=1}^m\left((y^\delta-y^\dagger,u_j)^2-\delta^2\right)\right|> \varepsilon \delta^2\right)\\
&\ge 1 -2 e^{-\frac{k_\delta}{2}(\varepsilon - \log(1+\varepsilon))} = 1-2 e^{-\frac{\varepsilon-\log(1+\varepsilon)}{2} \delta^\frac{-2}{q+\eta}}
\end{align*}

 }
	
{
\subsection{Proof of Remark \ref{rem1}}\label{sec:rem1}
Let $\varepsilon'>0$ be so small such that $\frac{1-3\varepsilon'}{1+2\varepsilon'}\frac{\sigma_1}{\sigma_K}>1$ and let $m_\delta$ be so large such that $\sqrt{\sum_{j=1}^\infty (y^\dagger,u_j)^2}\le \varepsilon'\sqrt{m-K}\delta$ and $(1-2\varepsilon')\sqrt{m-k}\ge (1-3\varepsilon')\sqrt{m}$ for all $m\ge m_\delta$ and $k\le m/2$. Then, on the set $\Omega_\delta$ from \eqref{om}, for all $m\ge m_\delta$ { and} $m/2\ge k\ge K$  it holds that

\begin{align*}
\Psi_m(k,y^\delta)\omd&\ge \frac{\sqrt{\sum_{j=K}^m(y^\delta-y^\dagger,u_j)^2}}{\sigma_K}\omd - \frac{\sqrt{\sum_{j=K}^m(y^\dagger,u_j)^2}}{\sigma_K}\omd\\
&\ge (1-\varepsilon')\frac{\sqrt{m-K}\delta}{\sigma_K}\omd - \varepsilon'\frac{\sqrt{m-K}\delta}{\sigma_K}\omd\\
&\ge \frac{1-3\varepsilon'}{1+2\varepsilon'}\frac{\sigma_1}{\sigma_K} (1+2\varepsilon') \frac{\sqrt{m}}{\sigma_1}\delta\omd\\
&\ge \frac{1-3\varepsilon'}{1+2\varepsilon'}\frac{\sigma_1}{\sigma_K} \Psi_m(1,y^\delta)\omd >\Psi_m(1,y^\delta)\omd,
\end{align*}

where we have used that 

\begin{align*}
\Psi_m(1,y^\delta)\omd&\le \frac{\sqrt{\sum_{j=1}^m(y^\delta-y^\dagger,u_j)^2}\omd + \sqrt{\sum_{j=1}^m(y^\dagger,u_j)^2}}{\sigma_1}\omd\\
&\le (1+2\varepsilon')\frac{\sqrt{m}\delta}{\sigma_1}\omd
\end{align*}

in the fourth step. { Consequently, $\arg\min_{k\le \frac{m}{2}} \Psi_m(k,y^\delta)\omd<K$ and } Proposition \ref{prop1} yields $\mathbb{P}\left(\Omega_\delta\right)\to 1$ as $\delta\to0$ and concludes the proof.
}

\section{Numerical experiments}\label{sec4}
In this final section we will test the introduced method numerically. For that we took four test problems from the popular MATLAB toolbox by Hansen \cite{hansen1994regularization}. These are $D\times D$ discretisations from one dimensional Fredholm integral equations, namely Phillips' test problem, an example from gravity surveying, the backwards heat equation and the determination of the second anti derivative. These problems cover varying degrees of ill-posedness and smoothness of the unknown solution. { Three different choices for the initial discretisation dimension $D\in\{2^7,2^9,2^{11}\}$ are being used in order to indicate the infinite-dimensional behaviour. For the corruption of the data we use two different marginal distributions, namely standard Gaussian white noise and a heavy-tailed and nonsymmetric generalised Pareto-distribution. Concretely in the second case we generated the data through the command {\tt gprnd($k$,$\sigma$,$\theta$)}, with shape parameter $k:=1/3$, scale parameter $\sigma:=(1-k)\sqrt{1-2k}$ and shift parameter $\theta:=-\frac{\sigma}{1-k}$. With these choices the marginal distributions are unbiased and have variance one. Note that in the second case already the third moment is unbounded. Moreover a wide range of noise levels $\delta$ are used. The particular choices of $\delta$ are made such that the signal to noise ratio (${\rm SNR}$), given through

$${\rm SNR}:=\frac{\mbox{signal}}{\mbox{noise}} = \frac{\|y^\dagger\|}{\sqrt{\E\|y^\delta-y^\dagger\|^2}} = \frac{\|y^\dagger\|}{{\sqrt{D \delta^2}}},$$

attains the following six values: { ${\rm SNR}\in\{0.01,0.01,0.1,1,10,100,1000\}$.} Note that a higher signal to noise ratio implies a smaller $\delta$. We compare the heuristic discrepancy principle to the modified discrepancy principle (with spectral cut-off) from \cite{jahn2021optimal,jahn2022probabilistic}.

 The philosophy of the classic nonheuristic discrepancy principle is that the reconstruction is determined such that it explains the measured data up to the noise. Hence it requires knowledge of $\delta$. To obtain this, one would classically determine a truncation level $k$  such that $\|Kx_k^\delta-y^\delta\|\approx \|y^\dagger-y^\delta\|$ (since $Kx_k^\delta$ is our approximation for $y^\dagger$). Because of the white noise dilemma the aforementioned norms are infinite and thus it can be applied only after discretisation, similar to the heuristic discrepancy principle. Precisely, it is determined as follows. For a fixed fudge parameter $\tau:=1.5$ (any other choice $\tau>1$ would be legit too) and discretisation level $m\le D$ we set

$$k_{\rm dp}^\delta(m):=\min\left\{0\le k\le m~:~ \sum_{j=k+1}^m(y^\delta,u_j)^2 \le \tau m \delta^2\right\},$$

and the final choice is $k^\delta_{\rm dp}:=\max_{m\in\N}k_{\rm dp}^\delta(m)$. { In order to evaluate the overall efficiency of the methods we compare with the (unattainable) optimal choice given by

\begin{equation}
k^\delta_{\rm opt}:=\arg\min_{k\le D}\|x_k^\delta-x^\dagger\|.
\end{equation}

 This choice is also known under the term oracle (see \cite{wasserman2006all}), since one could only determine its value if an oracle told one the exact $x^\dagger$ before. We present averaged (over 100 runs) relative errors, i.e., $e_*=\rm{mean}(\|x^\delta_{k^\delta_{*}}-x^\dagger\|)/\|x^\dagger\|$ for spectral cut-off and $e_*=\rm{mean}(\|x^\delta_{k^\delta_*,m^\delta_\cdot}-x^\dagger\|)/\|x^\dagger\|$ for Landweber iteration, with  $*\in\{\rm{HD},\rm{DP}, \rm{opt}\}$. The results are displayed in Table \ref{tab:phillips:g}-\ref{tab:heat:p}.

We come to the discussion of the results. First, we observe that in general the results are insensitive to a change of the size of the initial discretisation dimension $D$, which shows that the rigorous infinite-dimensional analysis in this paper is meaningful. Regarding efficiency of the method, we see that the errors of the modified heuristic discrepancy principles are fairly close to the optimum. More precisely, apart from the case with very noisy data {($\rm{SNR}=0.01$)} in almost all instances the respective errors of the modified heuristic discrepancy principle are smaller than two times the optimal error of the oracle. In the case {$\rm{SNR}=0.01$}, the reason for the significantly worse performance of the modified principles is that here the optimal truncation level is usually $k=0$ (that is, the approximation is the zero vector). Clearly, the modified choices have problems to find this truncation level correctly in this case, since here random fluctuations of the very first components have a strong impact. Still, apart from this case the methods work rather well nonasymptotically. Moreover, it is clearly visible that the modified heuristic discrepancy principle works equally well for non-Gaussian heavy-tailed noise, since the respective results are comparable to the one with Gaussian noise. However, the modified (nonheuristic) discrepancy principle performs significantly worse for non-Gaussian noise. We stress hereby that this is due to single simulations completely spoiling the results, in particular for exponentially ill-posed problems like {\tt gravity}. Note that exponentially ill-posed problems are very sensitive to the choice of the truncation level. The additional stability for exponentially ill-posed problems of the heuristic method might be surprising at first sight, but can be explained by the fact that the singular value appears in the nominator, which makes it very unlikely that the method truncates too late. Note that the nonheuristic discrepancy principle does not use the singular values explicitly and thus lacks this additional stability.  Regarding the heuristic discrepancy principle, the results are equally well for all four considered different problem setups. We only mention here without displaying the results that if one calculates the median instead of the mean then for all cases the nonheuristic and heuristic discrepancy principle yield almost the same results.

 All in all, the numerical study indicates that the modified heuristic discrepancy principle is a stable yet efficient method for applications, since it seems to give good results in different scenarios under minimal requirements.

\begin{table}[hbt!]
	\centering
	\caption{Comparison between DP and HDP + spectral cut-off for \texttt{phillips} and Gaussian noise.\label{tab:phillips:g}}
	\setlength{\tabcolsep}{4pt}
	\begin{tabular}{c|ccc|ccc|ccc|}
		\toprule
		\multicolumn{1}{c}{}&\multicolumn{3}{c}{$D=2^7$} & \multicolumn{3}{c}{$D=2^9$} & \multicolumn{3}{c}{$D=2^{11}$} \\
		\cmidrule(l){2-4} \cmidrule(l){5-7} \cmidrule(l){8-10} 
		${\rm SNR}$ & $e_{\rm HDP}$  & $e_{\rm DP}$ & $e_{\rm opt}$ & $e_{\rm HDP}$  & $e_{\rm DP}$ & $e_{\rm opt}$ & $e_{\rm HDP}$  & $e_{\rm DP}$ & $e_{\rm opt}$\\
		1e-2 & 1.1e1  &  5.3e0    & 9.8e-1   &  6.1e0    &  2.8e0    &  9.6e-1 & 2.8e0& 1.4e0& 9.4e-1\\
		1e-1 & 1.4e0   & 1.3e0     & 8.3e-1   & 9.1e-1    &  8.8e-1    & 7e-1   & 6.2e-1& 6e-1& 4.7e-1\\
		1e0 & 4e-1     & 3.7e-1      & 2.7e-1  & 3e-1   &  2.5e-1    & 1.7e-1 & 1.7e-1& 1.3e-1& 1.2e-1\\
		1e1 & 1.2e-1     & 1.1e-1      & 6.1e-2  & 1.1e-1   &  6.5e-2    & 4e-2 & 9.6e-2& 3e-2& 2.9e-2\\
		1e2  & 2.9e-2     & 2.6e-2      & 2.4e-2  & 2.6e-2   &  2.5e-2    & 1.9e-2 & 2.5e-2&2.5e-2 & 1.5e-2\\
		1e3 & 2.5e-2     & 1.5e-2      & 1.0e-2  & 2.4e-2   &  1.2e-2    & 6.7e-3 & 2.3e-2&8.6e-3 &5.3e-3  \\								
		\bottomrule
	\end{tabular}
\end{table}

\begin{table}[hbt!]
	\centering
	\caption{Comparison between DP and HDP + spectral cut-off for \texttt{phillips} and heavy-tailed noise.\label{tab:phillips:p}}
	\setlength{\tabcolsep}{4pt}
	\begin{tabular}{c|ccc|ccc|ccc|}
		\toprule
		\multicolumn{1}{c}{}&\multicolumn{3}{c}{$D=2^7$} & \multicolumn{3}{c}{$D=2^9$} & \multicolumn{3}{c}{$D=2^{11}$} \\
		\cmidrule(l){2-4} \cmidrule(l){5-7} \cmidrule(l){8-10} 
		${\rm SNR}$ & $e_{\rm HDP}$  & $e_{\rm DP}$ & $e_{\rm opt}$ & $e_{\rm HDP}$  & $e_{\rm DP}$ & $e_{\rm opt}$ & $e_{\rm HDP}$  & $e_{\rm DP}$ & $e_{\rm opt}$\\
		1e-2 & 1e1  &  3.1e4    & 9.9e-1   &  5.5e0    &  6.3e3    &  9.7e-1 & 2.8e0& 1.7e0& 9.5e-1\\
		1e-1 & 1.3e0   & 1.2e3     & 8.3e-1   & 8.9e-1    &  4.3e3    & 6.6e-1   & 6.3e-1& 8.1e0& 4.7e-1\\
		1e0 & 3.9e-1     & 2.1e2      & 2.6e-1  & 2.9e-1   &  3.5e0    & 1.5e-1 & 1.6e-1& 1.6e0& 1.1e-1\\
		1e1 & 1.1e-1     & 2.4e1      & 5.9e-2  & 1.1e-1   &  4.1e-1    & 3.9e-2 & 9.5e-2& 8.7e-2& 2.9e-2\\
		1e2  & 3.5e-2     & 1e1      & 2.4e-2  & 2.6e-2   &  2.6e-2    & 1.9e-2 & 2.5e-2&2.9e-1 & 1.5e-2\\
		1e3 & 2.5e-2     & 2.6e-1      & 9.3e-3  & 2.4e-2   &  1.3e-2    & 7.2e-3 & 2.1e-2&8.6e-3 &5.1e-3  \\								
		\bottomrule
	\end{tabular}
\end{table}

\begin{table}[hbt!]
	\centering
	\caption{Comparison between DP and HDP + spectral cut-off for \texttt{deriv2} and Gaussian noise.\label{tab:deriv2:g}}
	\setlength{\tabcolsep}{4pt}
	\begin{tabular}{c|ccc|ccc|ccc|}
		\toprule
		\multicolumn{1}{c}{}&\multicolumn{3}{c}{$D=2^7$} & \multicolumn{3}{c}{$D=2^9$} & \multicolumn{3}{c}{$D=2^{11}$} \\
		\cmidrule(l){2-4} \cmidrule(l){5-7} \cmidrule(l){8-10} 
		${\rm SNR}$ & $e_{\rm HDP}$  & $e_{\rm DP}$ & $e_{\rm opt}$ & $e_{\rm HDP}$  & $e_{\rm DP}$ & $e_{\rm opt}$ & $e_{\rm HDP}$  & $e_{\rm DP}$ & $e_{\rm opt}$\\
		1e-2 & 6.2e0  &  1.3e1    & 9.7e-1   &  3e0    &  4.4e0    &  9.4e-1 & 1.5e0& 3.2e0& 8.8e-1\\
		1e-1 & 8.8e-1   & 2.9e0     & 7.9e-1   & 6.7e-1    &  1.5e0    & 6.2e-1   & 6e-1& 7.3e-1& 5.5e-1\\
		1e0 & 5.6e-1     & 5.8e-1      & 5e-1  & 5e-1   &  5.4e-1    & 4.5e-1 & 4.8e-1& 4.8e-1& 4e-1\\
		1e1 & 4.3e-1     & 3.9e-1      & 3.5e-1  & 3.7e-1   &  3.7e-1    & 3.1e-1 & 3.4e-1& 3.2e-1& 2.8e-1\\
		1e2  & 2.8e-1     & 2.8e-1      & 2.4e-1  & 2.6e-1   &  2.5e-1    & 2.2e-1 & 2.3e-1&2.3e-1 & 1.9e-1\\
		1e3 & 2e-1     & 1.9e-1      & 1.7e-1  & 1.8e-1   &  1.7e-1    & 1.5e-1 & 1.6e-2&1.6e-1 &1.3e-1  \\								
		\bottomrule
	\end{tabular}
\end{table}

\begin{table}[hbt!]
	\centering
	\caption{Comparison between DP and HDP + spectral cut-off for \texttt{deriv2} and heavy-tailed noise.\label{tab:deriv2:p}}
	\setlength{\tabcolsep}{4pt}
	\begin{tabular}{c|ccc|ccc|ccc|}
		\toprule
		\multicolumn{1}{c}{}&\multicolumn{3}{c}{$D=2^7$} & \multicolumn{3}{c}{$D=2^9$} & \multicolumn{3}{c}{$D=2^{11}$} \\
		\cmidrule(l){2-4} \cmidrule(l){5-7} \cmidrule(l){8-10} 
		${\rm SNR}$ & $e_{\rm HDP}$  & $e_{\rm DP}$ & $e_{\rm opt}$ & $e_{\rm HDP}$  & $e_{\rm DP}$ & $e_{\rm opt}$ & $e_{\rm HDP}$  & $e_{\rm DP}$ & $e_{\rm opt}$\\
		1e-2 & 5.8e0  &  6.8e2    & 9.7e-1   &  3.6e0    &  2.4e3    &  9.8e-1 & 1.6e0& 1.4e0& 8.9e-1\\
		1e-1 & 8.5e-1   & 2.1e2     & 7.5e-1   & 6.9e-1    &  3e2    & 6.3e-1   & 6.5e-1& 1.1e1& 5.5e-1\\
		1e0 & 5.4e-1     & 9.3e1      & 4.9e-1  & 5e-1   &  2.1e0    & 4.5e-1 & 4.7e-1& 1.4e0& 3.9e-1\\
		1e1 & 4.1e-1     & 1.7e1      & 3.4e-1  & 3.7e-1   &  1.7e0    & 3.1e-1 & 3.3e-1& 6.7e-1& 2.8e-1\\
		1e2  & 2.8e-1     & 7e-1      & 2.3e-1  & 2.5e-1   &  2.7e-1    & 2.2e-1 & 2.3e-1&2.3e-1 & 1.9e-1\\
		1e3 & 1.9e-1     & 3.5e-1      & 1.6e-1  & 1.7e-1   &  1.9e-1    & 1.5e-1 & 1.6e-2&1.6e-1 &1.3e-1  \\								
		\bottomrule
	\end{tabular}
\end{table}

\begin{table}[hbt!]
	\centering
	\caption{Comparison between DP and HDP + spectral cut-off for \texttt{gravity} and Gaussian noise.\label{tab:gravity:g}}
	\setlength{\tabcolsep}{4pt}
	\begin{tabular}{c|ccc|ccc|ccc|}
		\toprule
		\multicolumn{1}{c}{}&\multicolumn{3}{c}{$D=2^7$} & \multicolumn{3}{c}{$D=2^9$} & \multicolumn{3}{c}{$D=2^{11}$} \\
		\cmidrule(l){2-4} \cmidrule(l){5-7} \cmidrule(l){8-10} 
		${\rm SNR}$ & $e_{\rm HDP}$  & $e_{\rm DP}$ & $e_{\rm opt}$ & $e_{\rm HDP}$  & $e_{\rm DP}$ & $e_{\rm opt}$ & $e_{\rm HDP}$  & $e_{\rm DP}$ & $e_{\rm opt}$\\
		1e-2 & 6.4e0  &  4e0    & 9.7e-1   &  3.7e0    &  5.8e0    &  9.5e-1 & 2e0& 2.7e0& 9.1e-1\\
		1e-1 & 9.3e-1   & 1.5e0     & 7.2e-1   & 6.9e-1    &  7.1e-1    & 5.5e-1   & 4.5e-1& 6e-1& 3.9e-1\\
		1e0 & 2.9e-1     & 3.5e-1      & 2.5e-1  & 2.1e-1   &  2.5e-1    & 1.9e-1 & 1.5e-1& 1.8e-1& 1.3e-1\\
		1e1 & 9.5e-2     & 1.1e-1      & 8.6e-2  & 7.4e-2   &  9.1e-2    & 6.8e-2 & 5.8e-2& 6.9e-2& 4.9e-2\\
		1e2  & 4.3e-2     & 5.3e-2      & 3.5e-2  & 3.3e-2   &  4e-2    & 2.8e-2 & 2.8e-2&3.1e-2 & 2.1e-2\\
		1e3 & 2e-2     & 2.3e-2      & 1.6e-2  & 1.5e-2   &  2e-2    & 1.2e-2 & 1.2e-2&1.4e-2 &9.4e-3  \\								
		\bottomrule
	\end{tabular}
\end{table}

\begin{table}[hbt!]
	\centering
	\caption{Comparison between DP and HDP + spectral cut-off for \texttt{gravity} and heavy-tailed noise.\label{tab:gravity:p}}
	\setlength{\tabcolsep}{4pt}
	\begin{tabular}{c|ccc|ccc|ccc|}
		\toprule
		\multicolumn{1}{c}{}&\multicolumn{3}{c}{$D=2^7$} & \multicolumn{3}{c}{$D=2^9$} & \multicolumn{3}{c}{$D=2^{11}$} \\
		\cmidrule(l){2-4} \cmidrule(l){5-7} \cmidrule(l){8-10} 
		${\rm SNR}$ & $e_{\rm HDP}$  & $e_{\rm DP}$ & $e_{\rm opt}$ & $e_{\rm HDP}$  & $e_{\rm DP}$ & $e_{\rm opt}$ & $e_{\rm HDP}$  & $e_{\rm DP}$ & $e_{\rm opt}$\\
		1e-2 & 6.2e0  &  6.2e7    & 9.5e-1   &  3.4e0    &  1.4e16    &  9.5e-1 & 2e0& 4.3e5& 8.9e-1\\
		1e-1 & 1.1e0   & 2.4e15     & 7.3e-1   & 6.8e-1    &  1.8e6    & 5.4e-1   & 4.5e-1& 1.2e14& 3.9e-1\\
		1e0 & 2.8e-1     & 1.9e14      & 2.5e-1  & 2e-1   &  2.9e14    & 1.7e-1 & 1.5e-1& 19.2e6& 1.3e-1\\
		1e1 & 9.1e-2     & 2.9e-1      & 8.2e-2  & 7.2e-2   &  5.8e4    & 6.5e-2 & 5.9e-2& 7.3e-2& 4.9e-2\\
		1e2  & 3.9e-2     & 2.7e10      & 3.3e-2  & 3.2e-2   &  1.1e11    & 2.7e-2 & 2.6e-2&3.6e11 & 2.1e-2\\
		1e3 & 1.8e-2     & 1.8e7      & 1.5e-2  & 1.5e-2   &  2.9e10    & 1.2e-2 & 1.2e-2&7.3e-2 &9.4e-3  \\								
		\bottomrule
	\end{tabular}
\end{table}

\begin{table}[hbt!]
	\centering
	\caption{Comparison between DP and HDP + spectral cut-off for \texttt{heat} and Gaussian noise.\label{tab:heat:g}}
	\setlength{\tabcolsep}{4pt}
	\begin{tabular}{c|ccc|ccc|ccc|}
		\toprule
		\multicolumn{1}{c}{}&\multicolumn{3}{c}{$D=2^7$} & \multicolumn{3}{c}{$D=2^9$} & \multicolumn{3}{c}{$D=2^{11}$} \\
		\cmidrule(l){2-4} \cmidrule(l){5-7} \cmidrule(l){8-10} 
		${\rm SNR}$ & $e_{\rm HDP}$  & $e_{\rm DP}$ & $e_{\rm opt}$ & $e_{\rm HDP}$  & $e_{\rm DP}$ & $e_{\rm opt}$ & $e_{\rm HDP}$  & $e_{\rm DP}$ & $e_{\rm opt}$\\
		1e-2 & 4.9e0  &  6.7e0    & 9.9e-1   &  2.4e0    &  2.5e0    &  1e0 & 1.4e0& 1.4e0& 9.7e-1\\
		1e-1 & 1.1e0   & 1.2e0     & 9.1e-1   & 9.4e-1    &  1e0    & 8.5e-1   & 8.3e-1& 8.7e-1& 7.6e-1\\
		1e0 & 7e-1     & 7.3e-1      & 6.7e-1  & 6.8e-1   &  6.8e-1    & 5.5e-1 & 6.6e-1& 6e-1& 3.9e-1\\
		1e1 & 5.4e-1     & 3.9e-1      & 2.8e-1  & 2.9e-1   &  3e-1    & 2.2e-1 & 2.6e-1& 2.6e-1& 1.5e-1\\
		1e2  & 1.7e-1     & 1.6e-1      & 1.1e-1  & 1e-1   &  1.1e-1    & 7.7e-2 & 9.4e-2&9.6e-2 & 5.6e-2\\
		1e3 & 4.5e-2     & 5.7e-2      & 3.6e-2  & 4.4e-2   &  4.6e-2    & 2.5e-2 & 2.2e-2&3e-2 &2e-2  \\								
		\bottomrule
	\end{tabular}
\end{table}

\begin{table}[hbt!]
	\centering
	\caption{Comparison between DP and HDP + spectral cut-off for \texttt{heat} and heavy-tailed noise.\label{tab:heat:p}}
	\setlength{\tabcolsep}{4pt}
	\begin{tabular}{c|ccc|ccc|ccc|}
		\toprule
		\multicolumn{1}{c}{}&\multicolumn{3}{c}{$D=2^7$} & \multicolumn{3}{c}{$D=2^9$} & \multicolumn{3}{c}{$D=2^{11}$} \\
		\cmidrule(l){2-4} \cmidrule(l){5-7} \cmidrule(l){8-10} 
		${\rm SNR}$ & $e_{\rm HDP}$  & $e_{\rm DP}$ & $e_{\rm opt}$ & $e_{\rm HDP}$  & $e_{\rm DP}$ & $e_{\rm opt}$ & $e_{\rm HDP}$  & $e_{\rm DP}$ & $e_{\rm opt}$\\
		1e-2 & 3.7e0  &  1e4    & 1e0   &  2.5e0    &  4e2    &  9.9e-1 & 1.4e0& 1.5e0& 9.7e-1\\
		1e-1 & 1e0   & 3.7e1     & 9.1e-1   & 9.2e-1    &  9.8e1    & 8.2e-1   & 8.2e-1& 4.3e0& 7.6e-1\\
		1e0 & 7.1e-1     & 4.2e0      & 6.5e-1  & 6.8e-1   &  8e0    & 5.2e-1 & 6.7e-1& 7.1e-1& 3.9e-1\\
		1e1 & 4.5e-1     & 5.3e-1      & 2.7e-1  & 2.8e-1   &  3.6e-1    & 2.1e-1 & 2.6e-1& 2.6e-1& 1.5e-1\\
		1e2  & 1.3e-1     & 4.2e-1      & 9.9e-2  & 1e-1   &  1.2e-1    & 7.4e-2 & 8.7e-2&9.6e-2 & 5.5e-2\\
		1e3 & 4.7e-2     & 8.6e-2      & 3.4e-2  & 4.2e-2   &  5.1e-2    & 2.5e-2 & 2.2e-2&3.1e-2 &1.9e-2  \\								
		\bottomrule
	\end{tabular}
\end{table}

}

\section{Concluding remarks}\label{sec5}

In this article we rigorously analysed a novel approach to solve statistical inverse problems without knowledge of the noise level, based on discretisation-adaptive choice of the regularisation parameter. The results are backed by numerical experiments. As future work it would be interesting to extend the rigorous analysis for spectral cut-off regularisation to more practical methods as the Landweber iteration or Tikhonov regularisation. { Here unreported numerical results  show the potential that the analysis can be extended combining the ideas from this manuscript and \cite{jahn2022discretisation}}. Another interesting idea would be to apply discretisation-adaptive regularisation to other heuristic methods as, e.g., the Hanke-Raus rule \cite{hanke1996general}, the quasi-optimality criterion \cite{tikhonov1963solution}, the L-curve criterion \cite{hansen1994regularization} or generalised cross-validation \cite{wahba1977practical}. From the numerical site it would be interesting to consider higher-dimensional integral equations.

\bibliographystyle{siamplain}
\bibliography{references}
\end{document}